\newtheorem{lemma}{Lemma}[section]
\newtheorem{theorem}{Theorem}[section]
\newcommand{\be}{\begin{equation}}
\newcommand{\ee}{\end{equation}}
\begin{document}
    \title{A Time-Split MacCormack Scheme for Two-Dimensional Nonlinear Reaction-Diffusion Equations}
   \author{\Large{Eric Ngondiep}
       \thanks{Tel.: +966506048689. E-mail addresses:\ ericngondiep@gmail.com or engondiep@imamu.edu.sa
        (Eric Ngondiep).\ }}
   \date{\small{Department of Mathematics and Statistics, College of Science, Al-Imam Muhammad
   Ibn Saud Islamic University (IMSIU), 90950 Riyadh, Kingdom of Saudi Arabia}\\
       \text{\,}\\
       \small{Hydrological Research Centre, Institute for Geological and Mining Research, 4110 Yaounde-Cameroon}}

    \maketitle
   \textbf{Abstract.}
   A three-level explicit time-split MacCormack scheme is proposed for solving the two-dimensional nonlinear reaction-diffusion equations. The computational cost is reduced thank to the splitting and the explicit MacCormack scheme. Under the well known condition of Courant-Friedrich-Lewy (CFL) for stability of explicit numerical schemes applied to linear parabolic partial differential equations, we prove the stability and convergence of the method in $L^{\infty}(0,T;L^{2})$-norm. A wide set of numerical evidences which provide the convergence rate of the new algorithm are presented and critically discussed.
   \text{\,} \\
    \text{\,}\\
   \ \noindent {\bf Keywords: $2$D nonlinear reaction-diffusion equations, locally one-dimensional operators (splitting), explicit MacCormack scheme, a three-level explicit time-split MacCormack method, stability and convergence rate.} \\
   \\
   {\bf AMS Subject Classification (MSC). 65M10, 65M05}.

      \section{Introduction and motivation}\label{sec1}
      A large number of biological problems of significant interest are modeled by parabolic equations \cite{27wcld}. The general framework is a set
      of biological entities (either ions, molecules, proteins or cells) that interact with each other and diffuse within a given domain. So it becomes
      possible to build some models via reaction-diffusion equations. For example, the dendritic spines possess a twitching motion which are described
       by the reaction-diffusion models \cite{34}. In this paper, we consider the following two-dimensional reaction-diffusion equations,
      \begin{equation}\label{1}
        u_{t}-a\Delta u=f(u),\text{\,\,\,\,\,}(x,y)\in\Omega,\text{\,\,\,\,\,}t\in(0,T];
      \end{equation}
      with the initial condition
      \begin{equation}\label{2}
        u(x,y,0)=u_{0}(x,y),\text{\,\,\,\,\,}(x,y)\in\overline{\Omega};
      \end{equation}
      and the boundary condition
      \begin{equation}\label{3}
        u(x,y,t)=\varphi(x,y,t),\text{\,\,\,\,\,}(x,y)\in\partial\Omega,
        \text{\,\,\,\,\,}t\in(0,T];
      \end{equation}
      where $a$ is the diffusive coefficient, $f\in\mathcal{C}^{1}(\mathbb{R})$ is a Lipschitz function, $\Omega=(0,1)^{2},$ $\Delta$ denotes the Laplacian
      operator, $\partial\Omega$ is the boundary of $\Omega$ and $u_{t}$ designates $\frac{\partial u}{\partial t}$. The initial condition $u_{0}$
      and the boundary condition $\varphi$ are assumed to be regular enough and satisfy the requirement $\varphi(x,y,0)=u_{0}(x,y),$ for every
      $(x,y)\in\partial\Omega,$ so that the initial value problem $(\ref{1})$-$(\ref{3}),$ admits a smooth solution.\\

      In the last decades \cite{1rb,mc2,nrn}, MacCormack approach which is a predictor-corrector, finite difference scheme has been used to solve
      certain classes of nonlinear partial differential equations (PDEs). There exist both explicit and implicit versions of the method, but the
      explicit predates the implicit by more than a decade, and it is considered as one of the milestones of computational fluid dynamics. Both versions facilitate the solution of parabolic and hyperbolic equations by marching forward in time \cite{mc2,mc3,mc4}. The popularity of MacCormack explicit
      method is due in part to its simplicity and ease of implementation. The predictor and corrector phases each uses forward differencing for first-order
       time derivatives, with alternate one-side differencing for first-order space derivatives. This is especially convenient for systems of equations with      nonlinear advertive jacobian matrices associated with one-side explicit schemes, such as Lax-Wendroff approach  (for instance, see \cite{3rb,nnnn,nrn}). However, the explicit MacCormack is not a suitable method for solving high Reynolds numbers flows, where the viscous regions become very thin (see \cite{apt}, P. $630$). To overcome this difficulty, MacCormack \cite{mc1} developed a hybrid version of his scheme, known as the MacCormack rapid solver method. The new algorithm is an explicit-implicit method. For example, in a search of an efficient solution, the authors \cite{en1,en2,en3,en4} applied this hybrid method to some complex PDEs (such as: mixed Stokes-Darcy model and $2D$ incompressible Navier-Stokes equations) and they obtained satisfactory results regarding both stability and convergence rate of the method. It is worth noticing to mention that the rapid solver algorithm has a good stability condition and it is too faster than a large set of numerical methods for solving steady and unsteady flows at high to low Reynolds numbers \cite{mc1}. So, the hybrid method of MacCormack will be used to solve the $2$D reaction-diffusion equations $(\ref{1})$-$(\ref{3}),$ in our future works.\\

      Armed with the information gleaned from both MacCormack and MacCormack rapid solver methods, we can now analyze a time-split MacCormack technique applied to problem $(\ref{1})$-$(\ref{3}).$ Firstly, it's worth noting to recall that the problem considered in this paper has been solved in literature by a wide set of explicit, implicit and coupled explicit-implicit numerical schemes. While some explicit methods usually suffer the severely restricted temporal step size \cite{1wcld,2wcld}, the fully implicit methods although unconditionally stable, provide a large system of nonlinear equations at every time level \cite{3wcld,4wcld}. These systems lead to a considerable computational cost in practical applications. A possible improvement is to use the second time discretization such as the linearized Cank-Nicolson, implicit-explicit and collocation approaches \cite{7wcld,8wcld,14wcld,en,17wcld,18wcld,24wcld,27wcld,28wcld}. Unfortunately, at least two starting values are needed to begin these algorithms. These values can be obtained by the initial and boundary conditions and an additional iterative method. To overcome this difficulty, the authors \cite{wcld} applied a two-level linearized compact ADI scheme to problem $(\ref{1})$-$(\ref{3}).$ The main results of their work (namely Theorems $1$-$2$) have been proved under the assumptions that the time step $\Delta t=k,$ mesh size $\Delta x=\Delta y=h$ and the ratio $\frac{\Delta t}{h^{2}}\leq C,$ must be sufficiently small (\cite{wcld}, page $9$, line above $(3.26)$ and page $10,$ Theorem $2$). These requirements in general are less restrictive and can make the method even more impractical. Furthermore, this paper represents an extension of the work in \cite{en5}.\\

      The time-split MacCormack approach we study for the initial-boundary value problem $(\ref{1})$-$(\ref{3})$ is new, a three-level explicit predictor-corrector method, second order accurate in time and fourth order convergent in space, under the time step restriction: $\frac{ak}{h^{2}}\leq\frac{1}{2},$ and it is motivated by this time step restriction (indeed, lots of explicit schemes for solving equation $(\ref{1})$-$(\ref{3}),$ are stable under the well-known condition of Courant-Friedrich-Lewy: $\frac{4ak}{h^{2}}\leq1$) and its efficiency and effectiveness. From this observation, it is obvious that: (a) a time-split MacCormack approach is more practical, (b) although the new algorithm and a two-level linearized compact ADI method have the same order of convergence, the linearized compact ADI scheme requires substantially more computer times to solve problem $(\ref{1})$-$(\ref{3}),$ than does a time-split MacCormack. An explicit time-split MacCormack algorithm \cite{mc4,en5,mc5} "splits" the original MacCormack scheme into a sequence of one-dimensional operations, thereby achieving a good stability condition. In other words, the splitting makes it possible to advance the solution in each direction with the maximum allowable time step. This is particularly advantageous if the allowable time steps $\Delta t_{x}$ and $\Delta t_{y}$, are much different because of differences in the mesh spacings $\Delta x$ and $\Delta y.$ In order to explain this method, we will make use of the $1$D difference operators $L_{x}(\Delta t_{x})$ and $L_{y}(\Delta t_{y})$. Setting $u_{ij}^{n}=u(x_{i},y_{j},t^{n}),$ the $L_{x}(\Delta t_{x})$ operator applied to $u_{ij}^{n}$,
      \begin{equation}\label{8}
        u_{ij}^{*}=L_{x}(\Delta t_{x})u_{ij}^{n},
       \end{equation}
      is by definition equivalent to the two-step predictor-corrector MacCormack formulation. The $L_{y}(\Delta t_{y})$ operator is defined in a similar manner, that is,
      \begin{equation}\label{9}
        u_{ij}^{*}=L_{y}(\Delta t_{y})u_{ij}^{n}.
       \end{equation}
       These expressions make use of a dummy time index, which is denoted by the asterisk. Now, letting $\Delta t_{x}=\Delta t$ and
        $\Delta t_{y}=\frac{\Delta t}{2m},$ where $m$ is a positive integer, a second order accurate scheme can be constructed by applying the
        $L_{x}$ and $Ly$ operators to $u_{ij}^{n},$ in the following way:
       \begin{equation*}
        u_{ij}^{n+1}=\left[L_{y}\left(\frac{\Delta t}{2m}\right)\right]^{m}L_{x}(\Delta t)\left[L_{y}\left(\frac{\Delta t}{2m}\right)\right]^{m}u_{ij}^{n}.
      \end{equation*}
       This sequence is quite useful for the case $\Delta y<<\Delta x.$ In general, a scheme formed by a sequence of these operators is: $(1)$ stable,
       if the time step of each operator does not exceed the allowable time step for that operator; $(2)$ consistent, if the sums of the time steps for
       each of the operators are equal: and $(3)$ second-order accurate, if the sequence is symmetric.\\

       In this paper, we are interested in a numerical solution of the initial-boundary value problem $(\ref{1})$-$(\ref{3}),$
       using a time-split MacCormack approach. Specifically, the work is focused on the following four items:
       \begin{description}
         \item[1.] full description of a three-level explicit time-split MacCormack scheme for solving the nonlinear reaction-diffusion
         equations $(\ref{1})$-$(\ref{3});$
         \item[2.] stability analysis of the numerical scheme;
         \item[3.] error estimates of the method;
         \item[4.] a wide set of numerical examples which provide the convergence rate, confirms the theoretical results and shows the
         efficiency and effectiveness of the method.
       \end{description}
        Items $1$, $2$ and $3$, are our original contributions since as far as we know, there is no available work in literature which solves the reaction-diffusion model $(\ref{1})$-$(\ref{3}),$ using a time-split MacCormack method.\\

        The paper is organized as follows: Section $\ref{sec2}$ considers a detailed description of a three-level explicit time-split MacCormack method applied to problem $(\ref{1})$-$(\ref{3}).$ In section $\ref{sec3},$ we study the stability of the numerical scheme under the condition given above, while section $\ref{sec4}$ analyzes the error estimates and the convergence of the method. A large set of numerical examples which provides the convergence rate of the new algorithm and confirms the theoretical result (on the stability) are presented and discussed in section $\ref{sec5}.$ We draw the general conclusion and present our future works in section $\ref{sec6}.$

      \section{Full description of a time-split MacCormack method}\label{sec2}
       This section deals with the description of a three-level explicit time-split MacCormack method applied to two-dimensional nonlinear reaction-diffusion equations $(\ref{1})$-$(\ref{3}).$\\

       Let $N$ and $M$ be two positive integers. Set $k:=\Delta t=\frac{T}{N};$ $h:=\Delta x=\Delta y=\frac{1}{M},$ be the time step and mesh size, respectively. Put $t^{n}=kn,$ $t^{*}=(n+r)k,$ $t^{**}=(n+s)k,$ where $0<r<s<1,$ so $t^{*}\in(t^{n},t^{n+1}),$ $t^{**}\in(t^{*},t^{n+1});$ $n=0,1,2,...,N-1;$ $x_{i}=ih;$ $y_{j}=jh;$ $0\leq i,j\leq M$. Also, let $\Omega_{k}=\{t^{n},0\leq n\leq N\};$ $\overline{\Omega}_{h}=\{(x_{i},y_{j}),0\leq i,j\leq M\};$
       $\Omega_{h}=\overline{\Omega}_{h}\cap\Omega$ and $\partial\Omega_{h}=\overline{\Omega}_{h}\cap\partial\Omega.$\\

       Consider $\mathcal{U}_{h}=\{u_{ij}^{n},n=0,1,...,N;\text{\,}i,j=0,1,2,...,M\}$ be the space of grid functions defined on $\Omega_{h}\times\Omega_{k}.$ Letting
       \begin{equation*}
        \delta_{t}u_{ij}^{*}=\frac{u_{ij}^{*}-u_{ij}^{n}}{k/2};\text{\,\,\,} \delta_{t} u_{ij}^{**}=\frac{u_{ij}^{**}-u_{ij}^{*}}{k};
        \text{\,\,\,}\delta_{t} u_{ij}^{n+1}=\frac{u_{ij}^{n+1}-u_{ij}^{**}}{k/2};\text{\,\,\,}\delta_{x}u_{i+\frac{1}{2},j}^{n}=\frac{u_{i+1,j}^{n}
        -u_{ij}^{n}}{h};
       \end{equation*}
       \begin{equation}\label{4}
        \delta_{y}u_{i,j+\frac{1}{2}}^{n}=\frac{u_{i,j+1}^{n}-u_{ij}^{n}}{h};\text{\,\,\,}\delta_{x}^{2}u_{ij}^{n}=\frac{\delta_{x}u_{i+\frac{1}{2},j}^{n}
        -\delta_{x}u_{i-\frac{1}{2},j}^{n}}{h};\text{\,\,}\delta_{y}^{2}u_{ij}^{n}=\frac{\delta_{y}u_{i,j+\frac{1}{2}}^{n}-\delta_{y}u_{i,j-\frac{1}{2}}^{n}}{h}.
       \end{equation}
       Using this, we define the following norms and scalar products.
       \begin{equation*}
        \|u^{n}\|_{L^{2}}=h\left(\underset{i,j=1}{\overset{M-1}\sum}|u_{ij}^{n}|^{2}\right)^{\frac{1}{2}};\text{\,\,}\|\delta_{x}u^{n}\|_{L^{2}}=
        h\left(\underset{j=1}{\overset{M-1}\sum}\underset{i=0}{\overset{M-1}\sum}|\delta_{x}u_{i+\frac{1}{2},j}^{n}|^{2}\right)^{\frac{1}{2}};
       \end{equation*}
       \begin{equation}\label{5}
        \|\delta_{y}u^{n}\|_{L^{2}}=h\left(\underset{j=0}{\overset{M-1}\sum}\underset{i=1}{\overset{M-1}\sum}|\delta_{y}u_{i,j+\frac{1}{2}}^{n}|^{2}\right)^{\frac{1}{2}};
        \text{\,}\|\delta_{\lambda}^{2}u^{n}\|_{L^{2}}=h\left(\underset{i,j=1}{\overset{M-1}\sum}|\delta_{\lambda}^{2}u_{ij}^{n}|^{2}\right)^{\frac{1}{2}},
       \end{equation}
       where $\lambda=x$ or $y.$ Furthermore, the scalar products are defined as
       \begin{equation*}
        (u^{n},v^{n})=h^{2}\underset{i,j=1}{\overset{M-1}\sum}u^{n}_{ij}v_{ij}^{n};  \text{\,\,}<\delta_{x}u^{n},\delta_{x}v^{n}>_{x}=h^{2}\underset{j=1}
        {\overset{M-1}\sum}\underset{i=0}{\overset{M-1}\sum}\delta_{x}u_{i+\frac{1}{2},j}^{n}\delta_{x}v_{i+\frac{1}{2},j}^{n};
       \end{equation*}
       and
       \begin{equation}\label{6}
        <\delta_{y}u^{n},\delta_{y}v^{n}>_{y}=h^{2}\underset{j=0}{\overset{M-1}\sum}\underset{i=1}{\overset{M-1}\sum}\delta_{y}u_{i,j
        +\frac{1}{2}}^{n}\delta_{y}v_{i,j+\frac{1}{2}}^{n}.
       \end{equation}
       The space $H^{1}(\Omega)$ is endowed with the norm $|\cdot|_{H^{1}}$ (respectively, $\|\cdot\|_{H^{1}}$) defined as
       \begin{equation}\label{7}
        |u^{n}|_{H^{1}}=\left(\|\delta_{x}u^{n}\|_{L^{2}}^{2}+\|\delta_{y}u^{n}\|_{L^{2}}^{2}\right)^{\frac{1}{2}}
        \text{\,\,\,\,and\,\,\,\,}\|u^{n}\|_{H^{1}}=\left(\|u^{n}\|_{L^{2}}^{2}+\|\delta_{x}u^{n}\|_{L^{2}}^{2}
        +\|\delta_{y}u^{n}\|_{L^{2}}^{2}\right)^{\frac{1}{2}}.
       \end{equation}
       It is worth noticing to mention that a time-split MacCormack \cite{mc5,mc4} "splits" the original MacCormack scheme into a sequence of $1$D operators, thereby achieving a less restrictive stability condition. In order words, the splitting makes it possible to advance the solution in each direction with the maximum allowable time step (\cite{apt}, page $231$).\\

       In other to give a detailed description of this method, we consider the $1$D difference operators $L_{x}(\Delta t_{x})$ and $L_{y}(\Delta t_{y})$
       defined by equations $(\ref{8})$ and $(\ref{9})$, respectively. Following the approach presented in (\cite{apt}, page $231$), a second-order accurate       scheme can be constructed by applying the $L_{x}$ and $L_{y}$ operators to $u_{ij}^{n}$ in the following manner:
       \begin{equation}\label{10}
        u_{ij}^{n+1}=L_{y}(k/2)L_{x}(k)L_{y}(k/2)u_{ij}^{n}.
       \end{equation}
       Using these tools, we are able to provide a three-level explicit time-split MacCormack method for solving the initial-boundary value problem $(\ref{1})$-$(\ref{3})$. Putting $\Delta t_{x}=k,$ $\Delta t_{y}=\frac{k}{2}$ and $\Delta x=\Delta y:=h,$ it comes from equations $(\ref{8})$, $(\ref{9})$ and $(\ref{10})$ that
       \begin{equation}\label{11}
       u_{ij}^{*}=L_{y}(k/2)u_{ij}^{n};\text{\,\,}u_{ij}^{**}=L_{x}(k)u_{ij}^{*}=L_{x}(k)L_{y}(k/2)u_{ij}^{n}\text{\,\,\,\,\,and\,\,\,\,\,} u_{ij}^{n+1}=L_{y}(k/2)u_{ij}^{**}.
       \end{equation}
       In the following, we should find explicit expressions of equations $u_{ij}^{*}=L_{y}(k/2)u_{ij}^{n}$ and $u_{ij}^{**}=L_{x}(k)u_{ij}^{*}.$ This will help to give an explicit formula of the equation $u_{ij}^{n+1}=L_{y}(k/2)u_{ij}^{**},$ which represents a "one-step time-split MacCormack algorithm".       For the sake of simplicity, we use both notations: $u)_{ij}^{n}=u_{ij}^{n}$ and $[u+v]_{ij}^{n}=u_{ij}^{n}+v^{n}_{ij}$.\\

       The application of the Taylor series expansion about $(x_{i},y_{j},t^{n})$ at the predictor and corrector
       steps with time step $k/2$ yields
       \begin{equation}\label{12a}
        u_{ij}^{\overline{*}}=u^{n}_{ij}+\frac{k}{2}u_{t})_{ij}^{n}+\frac{k^{2}}{8}u_{2t})_{ij}^{n}+O(k^{3});\text{\,\,}u_{ij}^{\overline{\overline{*}}}=
        u^{n}_{ij}+\frac{k}{2}u_{t})_{ij}^{\overline{*}}+\frac{k^{2}}{8}u_{2t})_{ij}^{\overline{*}}+O(k^{3}).
       \end{equation}
       From the definition of the operator $L_{y}(k/2),$ let consider the equation
       \begin{equation}\label{11a}
        u_{t}-au_{yy}=f(u),\text{\,\,\,which\,\,\,is\,\,\,equivalent\,\,\,to\,\,\,}u_{t}=au_{yy}+f(u).
       \end{equation}
        Using equation $(\ref{11a}),$ it is not difficult to see that
        \begin{equation*}
         u_{2t}=\left(au_{yy}+f(u)\right)_{t}= a^{2}u_{4y}+a\left(f(u)\right)_{yy}+\left(au_{yy}+f(u)\right)f^{'}(u).
        \end{equation*}
         This fact together with equation $(\ref{12a})$ provide
        \begin{equation}\label{12}
        u_{ij}^{\overline{*}}=u^{n}_{ij}+\frac{k}{2}[au_{yy}+f(u)]_{ij}^{n}+\frac{k^{2}}{8}\left[a^{2}u_{4y}+a\left(f(u)\right)_{yy}
        +\left(au_{yy}+f(u)\right)f^{'}(u)\right]_{ij}^{n}+O(k^{3});
       \end{equation}
       \begin{equation}\label{13}
        u_{ij}^{\overline{\overline{*}}}=u^{n}_{ij}+\frac{k}{2}[au_{yy}+f(u)]_{ij}^{\overline{*}}+\frac{k^{2}}{8}\left[a^{2}u_{4y}
        +a\left(f(u)\right)_{yy}+\left(au_{yy}+f(u)\right)f^{'}(u)\right]_{ij}^{\overline{*}}+O(k^{3}).
       \end{equation}
       Now, expanding the Taylor series about $(x_{i},y_{j},t^{n})$ with mesh size $h$ using central difference representation to get
       \begin{equation*}
        u_{yy,ij}^{n}=\delta_{y}^{2}u^{n}_{ij}+O(h^{2});\text{\,\,}\left(f(u)\right)_{yy,ij}^{n}=\delta_{y}^{2}\left(f(u^{n}_{ij})\right)
        +O(h^{2});\text{\,\,}u_{4y,ij}^{n}=\delta_{y}^{2}(\delta_{y}^{2}u^{n}_{ij})+O(h^{2});
       \end{equation*}
       \begin{equation}\label{14}
        u_{yy,ij}^{\overline{*}}=\delta_{y}^{2}u^{\overline{*}}_{ij}+O(h^{2});\text{\,\,}\left(f(u)\right)_{yy,ij}^{\overline{*}}=\delta_{y}^{2}
        \left(f(u^{\overline{*}}_{ij})\right)+O(h^{2});\text{\,\,}u_{4y,ij}^{\overline{*}}=\delta_{y}^{2}(\delta_{y}^{2}u^{\overline{*}}_{ij})+O(h^{2}),
       \end{equation}
       where $\delta_{y}^{2}w^{l}_{ij}$ is given by relation $(\ref{4}).$ Substituting equations $(\ref{14})$ into equations $(\ref{12})$
       and $(\ref{13})$ to obtain
       \begin{equation}\label{15}
        u_{ij}^{\overline{*}}=u^{n}_{ij}+\frac{k}{2}[a\delta_{y}^{2}u^{n}_{ij}+f(u_{ij}^{n})]+k^{2}\rho_{ij}^{n}+O(k^{3}+kh^{2});
       \end{equation}
       and
       \begin{equation}\label{16}
        u_{ij}^{\overline{\overline{*}}}=u^{n}_{ij}+\frac{k}{2}[a\delta_{y}^{2}u^{\overline{*}}_{ij}+f(u_{ij}^{\overline{*}})]+
        k^{2}\rho_{ij}^{\overline{*}}+O(k^{3}+kh^{2}),
       \end{equation}
       where
       \begin{equation}\label{20}
        \rho_{ij}^{\alpha}=\frac{1}{8}\left[a^{2}\delta_{y}^{2}(\delta_{y}^{2}u^{\alpha}_{ij})+a\delta_{y}^{2}f(u_{ij}^{\alpha})
        +a\delta_{y}^{2}u^{\alpha}_{ij}f^{'}(u_{ij}^{\alpha})+f(u_{ij}^{\alpha})f^{'}(u_{ij}^{\alpha})\right],
       \end{equation}
        where $\alpha=n,\overline{*}.$ The term $f(u_{ij}^{\overline{*}})$ should be expressed as a function of $f(u_{ij}^{n}),$ $f^{'}(u_{ij}^{n})$ and $u_{t,ij}^{n}.$ Applying the Taylor expansion about $(x_{i},y_{j},t^{n})$ with time step $k/2$ using forward difference representation to get
        \begin{equation}\label{17}
        f(u_{ij}^{\overline{*}})=f(u_{ij}^{n})+\frac{k}{2}u_{t,ij}^{n}f^{'}(u_{ij}^{n})+O(k^{2}).
        \end{equation}
        But, it comes from equation $(\ref{11a})$ and relations $(\ref{14})$ that
        \begin{equation}\label{18a}
         u_{t,ij}^{n}=au_{yy,ij}^{n}+f(u_{ij}^{n})=a\delta_{y}^{2}u^{n}_{ij}+f(u_{ij}^{n})+O(h^{2}).
        \end{equation}
        This fact, together with equation $(\ref{17})$ result in
        \begin{equation}\label{18}
        f(u_{ij}^{\overline{*}})=f(u_{ij}^{n})+\frac{k}{2}\left[a\delta_{y}^{2}u^{n}_{ij}+f(u_{ij}^{n})\right]f^{'}(u_{ij}^{n})+O(k^{2}+kh^{2}).
        \end{equation}
        Plugging equations $(\ref{15}),$ $(\ref{16})$ and $(\ref{18}),$ straightforward computations give
        \begin{equation}\label{19}
        u_{ij}^{\overline{\overline{*}}}=u^{n}_{ij}+\frac{k}{2}\left(a\delta_{y}^{2}u^{n}_{ij}+f(u_{ij}^{n})\right)
        +k^{2}(2\rho_{ij}^{n}+\rho_{ij}^{\overline{*}})+O(k^{3}+kh^{2}).
       \end{equation}
        Taking the average of $u_{ij}^{\overline{*}}$ and $u_{ij}^{\overline{\overline{*}}}$ to get
        \begin{equation}\label{20a}
        \frac{u_{ij}^{\overline{*}}+u_{ij}^{\overline{\overline{*}}}}{2}=u^{n}_{ij}+\frac{k}{2}\left(a\delta_{y}^{2}u^{n}_{ij}
        +f(u_{ij}^{n})\right)+\frac{1}{2}k^{2}(3\rho_{ij}^{n}+\rho_{ij}^{\overline{*}})+O(k^{3}+kh^{2}),
       \end{equation}
       where $\rho_{ij}^{\alpha}$ is given by relation $(\ref{20}).$\\

       On the other hand, to define the operator $L_{x}(k),$ we should consider the equation
       \begin{equation}\label{21}
        u_{t}=au_{xx}.
       \end{equation}
       It comes from equation $(\ref{21}),$ that
        \begin{equation}\label{21a}
         u_{2t}=au_{xx,t}=a^{2}u_{4x}.
        \end{equation}
       Applying the Taylor series expansion about $(x_{i},y_{i},t^{*})$ (where $t^{*}\in(t^{n},t^{n+1}),$ is the time used at the beginning of the
       next step in a time-split MacCormack scheme) with mesh size $h$ using central difference representation, we obtain
       \begin{equation}\label{22}
       u_{xx,ij}^{*}=\delta_{x}^{2}u^{*}_{ij}+O(h^{2});\text{\,\,}u_{4x,ij}^{*}=\delta_{x}^{2}(\delta_{x}^{2}u^{*}_{ij})+O(h^{2});\text{\,\,}
       u_{xx,ij}^{\overline{**}}=\delta_{x}^{2}u^{\overline{**}}_{ij}+O(h^{2});\text{\,\,}u_{4x,ij}^{\overline{**}}=\delta_{x}^{2}
       (\delta_{x}^{2}u^{\overline{**}}_{ij})+O(h^{2}),
       \end{equation}
       where $\delta_{x}^{2}u^{l}_{ij}$ is defined by equation $(\ref{4}).$ Also, expanding the Taylor series at the predictor and
       corrector steps about $(x_{i},y_{j},t^{*})$ with time step $k$ using forward difference, it is not difficult to observe that
       \begin{equation}\label{23}
        u_{ij}^{\overline{**}}=u^{*}_{ij}+ku_{t})_{ij}^{*}+\frac{k^{2}}{2}u_{2t})_{ij}^{*}+O(k^{3});
        \text{\,\,}u_{ij}^{\overline{\overline{**}}}=u^{*}_{ij}+ku_{t})_{ij}^{\overline{**}}+\frac{k^{2}}{2}u_{2t})_{ij}^{\overline{**}}+O(k^{3}).
       \end{equation}
        A combination of equations $(\ref{23})$, $(\ref{22})$, $(\ref{21})$ and $(\ref{21a})$ provides
        \begin{equation}\label{25}
        u_{ij}^{\overline{**}}=u^{*}_{ij}+ak\delta_{x}^{2}u^{*}_{ij}+\frac{a^{2}k^{2}}{2}\delta_{x}^{2}(\delta_{x}^{2}u^{*}_{ij})+O(k^{3}+kh^{2});
        \text{\,\,}u_{ij}^{\overline{\overline{**}}}=u^{*}_{ij}+ak\delta_{x}^{2}u^{\overline{**}}_{ij}+\frac{a^{2}k^{2}}{2}
        \delta_{x}^{2}(\delta_{x}^{2}u^{\overline{**}}_{ij})+O(k^{3}+kh^{2}).
       \end{equation}
        In order to obtain a simple expression of $\delta_{x}^{2}u^{\overline{**}}_{ij}$, we should use the first equation in $(\ref{25})$. Tracking the infinitesimal term in this equation, direct computations give
       \begin{equation*}
        \delta_{x}^{2}u^{\overline{**}}_{ij}=\delta_{x}^{2}u^{*}_{ij}+ak\delta_{x}^{2}\left(\delta_{x}^{2}u^{*}_{ij}\right)+\frac{a^{2}k^{2}}{2}
        \delta_{x}^{2}(\delta_{x}^{4}u^{*}_{ij}).
       \end{equation*}
        The truncation of this error term does not compromise the result. This fact, together with relation $(\ref{25})$ yield
        \begin{equation}\label{27}
        u^{\overline{\overline{**}}}_{ij}=u^{*}_{ij}+ak\delta_{x}^{2}u^{*}_{ij}+\frac{3a^{2}k^{2}}{2}\delta_{x}^{2}(\delta_{x}^{2}u^{*}_{ij})+O(k^{3}
        +kh^{2}).
        \end{equation}
        Taking the average of $u_{ij}^{\overline{**}}$ and $u_{ij}^{\overline{\overline{**}}}$, it is not hard to see that
        \begin{equation}\label{31}
        \frac{u_{ij}^{\overline{**}}+u_{ij}^{\overline{\overline{**}}}}{2}=u^{*}_{ij}+ak\delta_{x}^{2}u^{*}_{ij}+a^{2}k^{2}
        \delta_{x}^{2}(\delta_{x}^{2}u^{*}_{ij})+O(k^{3}+kh^{2}).
       \end{equation}

        In way similar, starting with the one-dimensional equation: $u_{t}-au_{yy}=f(u)$, expanding the Taylor series about $(x_{i},y_{j},t^{**})$
        (where $t^{**}$ represents the time used at the last step in a time-split MacCormack approach) at the predictor and corrector steps with time
        step $k/2$ and mesh size $h,$ using forward difference representations to get
        \begin{equation}\label{33}
        \frac{u_{ij}^{\overline{n+1}}+u_{ij}^{\overline{\overline{n+1}}}}{2}=u^{**}_{ij}+\frac{k}{2}\left(a\delta_{y}^{2}u^{**}_{ij}
        +f(u_{ij}^{**})\right)+\frac{k^{2}}{2}(3\gamma_{ij}^{**}+\gamma_{ij}^{\overline{n+1}})+O(k^{3}+kh^{2}),
       \end{equation}
       where we set $\mu=**,\overline{n+1}$, and
       \begin{equation}\label{34}
        \gamma_{ij}^{\mu}=\frac{1}{8}\left[a^{2}\delta_{y}^{2}(\delta_{y}^{2}u^{\mu}_{ij})+a\delta_{y}^{2}f(u_{ij}^{\mu})
        +a\delta_{y}^{2}u^{\mu}_{ij}f^{'}(u_{ij}^{\mu})+f(u_{ij}^{\mu})f^{'}(u_{ij}^{\mu})\right].
       \end{equation}

       To construct a three-level explicit time-split MacCormack method for solving the nonlinear reaction-diffusion equation $(\ref{1})$-$(\ref{3}),$
       we must follow the ideas presented in the literature to construct the explicit MacCormack scheme\cite{mc1,mc2,mc4,mc5}. Specifically, we should neglect the terms of second order together with the infinitesimal term $O(k^{3}+kh^{2})$ in equations $(\ref{20a}),$ $(\ref{31})$ and $(\ref{33})$. In addition, the terms $u_{ij}^{*},$ $u_{ij}^{**}$ and $u^{n+1}_{ij}$ must be defined as the average of predicted and corrected values, that is,
       \begin{equation}\label{35}
        u_{ij}^{*}=\frac{u_{ij}^{\overline{*}}+u_{ij}^{\overline{\overline{*}}}}{2};\text{\,\,\,}u_{ij}^{**}=
        \frac{u_{ij}^{\overline{**}}+u_{ij}^{\overline{\overline{**}}}}{2}\text{\,\,\,\,\,\,and\,\,\,\,\,\,}u_{ij}^{n+1}=
        \frac{u_{ij}^{\overline{n+1}}+u_{ij}^{\overline{\overline{n+1}}}}{2}.
       \end{equation}
       Thus, equations
       \begin{equation}\label{36}
        u_{ij}^{*}=L_{y}(k/2)u_{ij}^{n};\text{\,\,\,}u_{ij}^{**}=L_{x}(k)u_{ij}^{*}\text{\,\,\,\,\,\,and\,\,\,\,\,\,}u_{ij}^{n+1}=L_{y}(k/2)u_{ij}^{**},
       \end{equation}
       are by definition equivalent to
       \begin{equation}\label{37}
       u_{ij}^{*}=u^{n}_{ij}+\frac{k}{2}\left(a\delta_{y}^{2}u^{n}_{ij}+f(u_{ij}^{n})\right);\text{\,\,\,}u_{ij}^{**}=u^{*}_{ij}+ak\delta_{x}^{2}u^{*}_{ij}
        \text{\,\,\,\,and\,\,\,\,}u_{ij}^{n+1}=u^{**}_{ij}+\frac{k}{2}\left(a\delta_{y}^{2}u^{**}_{ij}+f(u_{ij}^{**})\right).
       \end{equation}
       Since the operator $L_{y}(k/2)L_{x}(k)L_{y}(k/2)$ is symmetric, this fact together with relations $(\ref{20a})$, $(\ref{31})$ and
        $(\ref{33})$ show that the obtained method is a three-level technique, an explicit predictor-corrector scheme, second order accurate
       in time and fourth order convergent in space. This theoretical result is confirmed by a wide set of numerical examples (we refer
       the readers to section $\ref{sec5}$). From the definition of the linear operators $"\delta_{x}^{2}"$ and $"\delta_{y}^{2}"$ given by
       $(\ref{4})$, equation $(\ref{37})$ can be rewritten as follows. For $n=0,1,...,N-1;$
       \begin{equation}\label{40}
        u_{ij}^{*}=u^{n}_{ij}+\frac{k}{2}\left(\frac{a}{h^{2}}(u^{n}_{i,j+1}-2u^{n}_{ij}+u^{n}_{i,j-1})+f(u_{ij}^{n})\right),\text{\,\,\,\,}
        i=0,1,...,M;\text{\,\,\,}j=1,2,...,M-1;
       \end{equation}
       \begin{equation}\label{41}
        u_{ij}^{**}=u^{*}_{ij}+\frac{ak}{h^{2}}(u^{*}_{i+1,j}-2u^{*}_{ij}+u^{*}_{i-1,j}),
        \text{\,\,\,\,}i=1,2,...,M-1; \text{\,\,\,}j=0,1,...,M;
       \end{equation}
       \begin{equation}\label{42}
        u_{ij}^{n+1}=u^{**}_{ij}+\frac{k}{2}\left(\frac{a}{h^{2}}(u^{**}_{i,j+1}-2u^{**}_{ij}+u^{**}_{i,j-1})+f(u_{ij}^{**})\right),
        \text{\,\,\,\,}i=0,1,...,M;\text{\,\,\,}j=1,2,...,M-1,
       \end{equation}
        with the initial and boundary conditions. For $i,j=0,1,...,M,$
        \begin{equation*}
        u_{ij}^{0}=u_{0}(x_{i},y_{j});\text{\,}u_{i0}^{n}=\varphi^{n}_{i0};\text{\,}u_{iM}^{n}=\varphi^{n}_{iM};
        \text{\,}u_{0j}^{n}=\varphi^{n}_{0j};\text{\,}u_{Mj}^{n}=\varphi^{n}_{Mj};\text{\,\,}u_{0j}^{*}=\varphi^{n+1}_{0j};\text{\,}u_{Mj}^{*}=
        \varphi^{n+1}_{Mj}; \text{\,}u_{j0}^{*}=\varphi^{n+1}_{j0};
        \end{equation*}
        \begin{equation*}
        u_{jM}^{*}=\varphi^{n+1}_{jM};\text{\,\,\,}u_{0j}^{**}=\varphi^{n+1}_{0j};\text{\,}u_{Mj}^{**}=\varphi^{n+1}_{Mj};\text{\,}
        u_{j0}^{**}=\varphi^{n+1}_{j0};\text{\,}u_{jM}^{**}=\varphi^{n+1}_{jM};\text{\,}u_{i0}^{N}=\varphi^{N}_{i0};\text{\,}u_{iM}^{N}=\varphi^{N}_{iM};
        \end{equation*}
        \begin{equation}\label{24}
         u_{0j}^{N}=\varphi^{N}_{0j};\text{\,}u_{Mj}^{N}=\varphi^{N}_{Mj},
        \end{equation}
       which represent a detailed description of a three-level explicit time-split MacCormack method applied to problem $(\ref{1})$-$(\ref{3}).$\\

       In the rest of this paper, we prove the stability, the error estimates and the convergence rate of a three-level time-split MacCormack
       approach under the time step restriction
       \begin{equation}\label{44}
        \frac{2ak}{h^{2}}\leq1,
       \end{equation}
       where $a$ is the diffusive coefficient given in equation $(\ref{1}).$ We recall that $k$ is the time step and $h$ is the grid size.
       Estimate $(\ref{44})$ is well known in literature as CFL condition for stability of the explicit schemes when solving linear parabolic
       equations. We assume that the analytical solution $\overline{u}\in L^{\infty}(0,T;L^{2}(\Omega)) \cap H^{1}(0,T;H^{3}(\Omega))\cap H^{2}(0,T;H^{1}(\Omega))\cap H^{2}(0,T;L^{2}(\Omega))\cap L^{2}(0,T;H^{4}(\Omega)),$ that is, there exists a positive constant $\widetilde{C},$ independent of both time step $k$ and mesh size $h,$ so that
      \begin{equation}\label{43}
      \||\overline{u}|\|_{L^{\infty}(0,T;L^{2}(\Omega))}+\||\overline{u}|\|_{H^{1}(0,T;H^{3}(\Omega))}+\||\overline{u}|\|_{H^{2}(0,T;H^{1}(\Omega))}+
      \||\overline{u}|\|_{H^{2}(0,T;L^{2}(\Omega))}+\||\overline{u}|\|_{L^{2}(0,T;H^{4}(\Omega))}\leq\widetilde{C}.
      \end{equation}

      \section{Stability analysis of a three-level time-split MacCormack scheme}\label{sec3}
      This section considers a deep analysis of the stability of a three-level time-split MacCormack scheme $(\ref{40})$-$(\ref{24})$ for
      solving equations $(\ref{1})$-$(\ref{3}).$

      \begin{theorem}\label{t1}
      Let $u$ be the solution provided by the numerical scheme $(\ref{40})$-$(\ref{24})$. Under the time step restriction $(\ref{44}),$
      the following estimate holds
      \begin{equation*}
      \underset{0\leq n\leq N}{\max}\|u^{n}\|_{L^{2}(\Omega)}\leq\widetilde{C}+\exp\left(CT\underset{l=0}{\overset{3}\sum}(Ck)^{l}\right),
      \end{equation*}
      where $C$ is a positive constant independent of the time step $k$ and mesh size $h$ and $\widetilde{C}$ is
      given by relation $(\ref{43}).$
      \end{theorem}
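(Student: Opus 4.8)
The plan is to derive an energy recursion by estimating each of the three sub-steps $(\ref{40})$, $(\ref{41})$, $(\ref{42})$ separately in the discrete $L^{2}$-norm and then chaining them together. First I would square the $L^{2}$-norm of the half-step $(\ref{40})$, writing
\begin{equation*}
\|u^{*}\|_{L^{2}}^{2}=\|u^{n}\|_{L^{2}}^{2}+k\left(u^{n},a\delta_{y}^{2}u^{n}+f(u^{n})\right)+\frac{k^{2}}{4}\left\|a\delta_{y}^{2}u^{n}+f(u^{n})\right\|_{L^{2}}^{2}.
\end{equation*}
The central tool is discrete summation by parts, which turns the cross term into the dissipative quantity $(u^{n},a\delta_{y}^{2}u^{n})=-a\|\delta_{y}u^{n}\|_{L^{2}}^{2}$ (up to boundary contributions governed by $\varphi$), together with the inverse-type inequality $\|\delta_{y}^{2}u\|_{L^{2}}^{2}\le\frac{4}{h^{2}}\|\delta_{y}u\|_{L^{2}}^{2}$, valid because $-\delta_{y}^{2}$ is symmetric positive semidefinite with spectrum contained in $[0,4/h^{2}]$. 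Separating diffusion from reaction in the quadratic correction by Young's inequality and applying this bound, the coefficient of $\|\delta_{y}u^{n}\|_{L^{2}}^{2}$ becomes $-ak\bigl(1-\frac{2ak}{h^{2}}\bigr)$, which under the CFL restriction $(\ref{44})$ is non-positive; hence the whole seminorm term may be discarded.

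Next I would control the reaction contributions through the Lipschitz hypothesis on $f$: from $|f(u)|\le|f(0)|+L|u|$ one obtains $\|f(u^{n})\|_{L^{2}}\le C+L\|u^{n}\|_{L^{2}}$, so that both the cross term $k(u^{n},f(u^{n}))$ and the quadratic remainder involving $\|f(u^{n})\|_{L^{2}}^{2}$ are bounded by $Ck+Ck\|u^{n}\|_{L^{2}}^{2}$ plus terms of higher order in $k$. This yields a half-step estimate of the form $\|u^{*}\|_{L^{2}}^{2}\le(1+Ck+Ck^{2})\|u^{n}\|_{L^{2}}^{2}+Ck+Ck^{2}$. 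The pure-diffusion $x$-step $(\ref{41})$ is treated identically but without reaction terms: summation by parts and the inverse inequality give the coefficient $-2ak\bigl(1-\frac{2ak}{h^{2}}\bigr)$ of $\|\delta_{x}u^{*}\|_{L^{2}}^{2}$, again non-positive under $(\ref{44})$, so that $\|u^{**}\|_{L^{2}}^{2}\le\|u^{*}\|_{L^{2}}^{2}$. The final half-step $(\ref{42})$ mirrors the first.

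Then I would chain the three inequalities, substituting so that $\|u^{n+1}\|_{L^{2}}^{2}$ is bounded purely in terms of $\|u^{n}\|_{L^{2}}^{2}$. Because each half-step carries a factor $(1+Ck+Ck^{2})$ and these are composed, the growth factor accumulates powers of $k$, producing a single-step recursion of the shape $\|u^{n+1}\|_{L^{2}}^{2}\le\bigl(1+Ck\sum_{l=0}^{3}(Ck)^{l}\bigr)\|u^{n}\|_{L^{2}}^{2}+Ck(1+\text{l.o.t.})$. A discrete Gronwall argument over $n=0,1,\dots,N-1$ with $Nk=T$ then converts the multiplicative factor into the exponential $\exp\bigl(CT\sum_{l=0}^{3}(Ck)^{l}\bigr)$, while the initial datum $\|u^{0}\|_{L^{2}}\le\widetilde{C}$ from $(\ref{43})$ and the accumulated source/boundary terms furnish the additive $\widetilde{C}$. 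Taking the maximum over $n$ and a square root delivers the claimed bound.

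The main obstacle I anticipate is twofold. The delicate part is the CFL cancellation, which is tight: in the $x$-step the squared correction produces $\frac{4a^{2}k^{2}}{h^{2}}\|\delta_{x}u^{*}\|_{L^{2}}^{2}$ through the inverse inequality, dominated by the dissipation $2ak\|\delta_{x}u^{*}\|_{L^{2}}^{2}$ exactly at the threshold $\frac{2ak}{h^{2}}\le1$, leaving no slack, and the $y$ half-steps must absorb the analogous term under the same threshold after the diffusion–reaction splitting. The alignment of the spectral bound $4/h^{2}$, the half time-steps $k/2$, and the constant in $(\ref{44})$ is therefore essential. The second difficulty is the careful bookkeeping of the inhomogeneous Dirichlet data through each summation by parts and through the composition of the three operators, ensuring that every boundary contribution is controlled by $\widetilde{C}$ via $(\ref{43})$ and that the accumulated growth factor is genuinely of degree three in $Ck$ rather than larger.
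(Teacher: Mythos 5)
Your overall architecture---square each sub-step, kill the diffusion cross term by summation by parts, absorb the quadratic correction into the dissipation exactly at the CFL threshold, control the reaction term through the Lipschitz property, chain the three sub-steps, and finish with discrete Gronwall---is the same skeleton as the paper's proof, and your spectral bound $\|\delta_{y}^{2}u\|_{L^{2}}^{2}\leq\frac{4}{h^{2}}\|\delta_{y}u\|_{L^{2}}^{2}$ is an equivalent substitute for the paper's elementary pointwise inequality $(u_{i,j+1}-2u_{ij}+u_{i,j-1})^{2}\leq2[(u_{i,j+1}-u_{ij})^{2}+(u_{ij}-u_{i,j-1})^{2}]$. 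The one structural difference is that you run the energy estimate directly on $u^{n}$, whereas the paper runs it on the error $e^{n}=u^{n}-\overline{u}^{n}$ and only converts back to $\|u^{n}\|_{L^{2}}$ at the very end via $\|u^{p}\|_{L^{2}}\leq\|\overline{u}^{p}\|_{L^{2}}+\|e^{p}\|_{L^{2}}$ and the regularity bound $(\ref{43})$. That difference is precisely where your plan has a genuine gap.

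The gap is the boundary term. With inhomogeneous Dirichlet data, your summation by parts does not yield $-a\|\delta_{y}u^{n}\|_{L^{2}}^{2}$ ``up to boundary contributions governed by $\varphi$'' in any harmless sense: the leftover is of the form $akh\sum_{i}\bigl[(\delta_{y}u^{n}_{i,M-\frac{1}{2}})\varphi^{n}_{iM}-(\delta_{y}u^{n}_{i,\frac{1}{2}})\varphi^{n}_{i0}\bigr]$, which involves the normal difference quotients of the \emph{unknown} solution at the boundary, not just $\varphi$. Writing $|\delta_{y}u^{n}_{i,M-\frac{1}{2}}|\leq(|\varphi^{n}_{iM}|+|u^{n}_{i,M-1}|)/h$ and using Young's inequality turns this into a contribution of size $\frac{ak}{h^{2}}\|u^{n}\|_{L^{2}}^{2}$ plus lower-order data terms; under the CFL scaling $\frac{2ak}{h^{2}}\leq1$ this is an $O(1)$, not $O(k)$, perturbation of the recursion coefficient, which Gronwall cannot tolerate over $N=T/k$ steps. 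Absorbing it into the dissipation instead would require slack in $(\ref{44})$, which---as you yourself observe---does not exist, since the cancellation is tight. The missing idea is exactly the paper's move: subtract a function carrying the boundary data before doing the energy estimate. The paper subtracts the exact solution, so that $e^{n}_{0j}=e^{n}_{Mj}=e^{n}_{i0}=e^{n}_{iM}=0$ by $(\ref{24})$, the summation by parts of Lemma $\ref{l1}$ is exact with no boundary terms, the Lipschitz hypothesis is used in the homogeneous form $|f(u^{n}_{ij})-f(\overline{u}^{n}_{ij})|\leq C|e^{n}_{ij}|$ so no source accumulates, and the additive $\widetilde{C}$ in the statement comes from $\|\overline{u}^{p}\|_{L^{2}}$ via $(\ref{43})$ rather than from the initial datum. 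Without this (or an equivalent boundary lifting) your argument stalls at the very first summation by parts.
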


      The following result (namely Lemmas $\ref{l1}$) plays a crucial role in the proof of Theorem $\ref{t1}$.

       \begin{lemma}\label{l1}
       Setting $u_{ij}^{n}=u(x_{i},y_{j},t^{n})$ be the numerical solution provided by the scheme $(\ref{40})$-$(\ref{24})$, $\overline{u}_{ij}^{n}=\overline{u}(x_{i},y_{j},t^{n})$ be the exact one and let $e_{ij}^{n}=u_{ij}^{n}-\overline{u}_{ij}^{n}$ be the error. We recall that $\overline{u}_{ij}^{*}=\frac{\overline{u}_{ij}^{\overline{*}}+\overline{u}_{ij}^{\overline{\overline{*}}}}{2},$
       $\overline{u}_{ij}^{**}=\frac{\overline{u}_{ij}^{\overline{**}}+\overline{u}_{ij}^{\overline{\overline{**}}}}{2}$,
       satisfy relations $(\ref{20a})$ and $(\ref{31})$, respectively. $u_{ij}^{*}$ and $u_{ij}^{**}$ are given by equations $(\ref{40})$
       and $(\ref{41})$, respectively. The following equalities hold:
       \begin{equation}\label{72}
        a<\delta_{x}^{2}e_{ij}^{n},e_{ij}^{n}>_{x}=h^{2}\underset{j,i=1}{\overset{M-1}\sum}\frac{a}{h^{2}}\left(e_{i+1,j}^{n}-2e_{ij}^{n}
        +e_{i-1,j}^{n}\right)e_{ij}^{n}=-a\|\delta_{x}e^{n}\|_{L^{2}(\Omega)}^{2},
       \end{equation}
       and
       \begin{equation}\label{72a}
        a<\delta_{y}^{2}e_{ij}^{n},e_{ij}^{n}>_{y}=h^{2}\underset{j,i=1}{\overset{M-1}\sum}\frac{a}{h^{2}}\left(e_{i,j+1}^{n}-2e_{ij}^{n}
        +e_{i,j-1}^{n}\right)e_{ij}^{n}=-a\|\delta_{y}e^{n}\|_{L^{2}(\Omega)}^{2},
       \end{equation}
       where the operators $\delta_{x}$ and $\delta_{y}$ are given by relation $(\ref{4}).$
       \end{lemma}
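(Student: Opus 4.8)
The two identities are the discrete counterpart of the integration-by-parts formula $\int_{\Omega}u_{xx}\,u\,dx=-\int_{\Omega}(u_{x})^{2}\,dx$, valid for functions vanishing on $\partial\Omega$, so the plan is to prove them by discrete summation by parts (Abel summation). I would carry out $(\ref{72})$ in full and then deduce $(\ref{72a})$ by the symmetric argument, interchanging the roles of the indices $i$ and $j$.

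First I would unwind the middle member. Since $\delta_{x}^{2}e_{ij}^{n}=\frac{1}{h^{2}}(e_{i+1,j}^{n}-2e_{ij}^{n}+e_{i-1,j}^{n})$ by $(\ref{4})$, the quantity $a<\delta_{x}^{2}e^{n},e^{n}>_{x}$ is nothing but $a\,h^{2}\sum_{i,j=1}^{M-1}\delta_{x}^{2}e_{ij}^{n}\,e_{ij}^{n}$, i.e. the central expression displayed in $(\ref{72})$. Next, fixing $j$ and working on the inner sum over $i=1,\dots,M-1$, I would write the second difference as the increment of a first difference, $e_{i+1,j}^{n}-2e_{ij}^{n}+e_{i-1,j}^{n}=h\big(\delta_{x}e_{i+\frac12,j}^{n}-\delta_{x}e_{i-\frac12,j}^{n}\big)$, and apply Abel summation to shift one difference onto the factor $e_{ij}^{n}$. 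After reindexing, the telescoping collapses the bulk of the sum into $-h^{2}\sum_{i=0}^{M-1}(\delta_{x}e_{i+\frac12,j}^{n})^{2}$, plus two boundary contributions arising from $i=0$ and $i=M$.

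The one point that has to be handled with care is precisely these boundary terms. Here I would invoke that the error vanishes on $\partial\Omega_{h}$: because the numerical solution $u$ and the exact solution $\overline{u}$ obey the \emph{same} Dirichlet data $\varphi$ (compare $(\ref{3})$ with the boundary prescriptions in $(\ref{24})$), one has $e_{0,j}^{n}=e_{M,j}^{n}=0$ for every $j$, so the endpoint terms cancel exactly. Summing the resulting identity over $j=1,\dots,M-1$ and noting that the prefactor $h^{2}\cdot\frac{a}{h^{2}}$ equals $a$, the central member of $(\ref{72})$ becomes $-a\,h^{2}\sum_{j=1}^{M-1}\sum_{i=0}^{M-1}|\delta_{x}e_{i+\frac12,j}^{n}|^{2}$, which by the definition of the $L^{2}$-seminorm in $(\ref{5})$ is exactly $-a\|\delta_{x}e^{n}\|_{L^{2}(\Omega)}^{2}$. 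This establishes $(\ref{72})$.

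For $(\ref{72a})$ the same computation carries over verbatim with $x$ replaced by $y$ and the summation by parts performed in the $j$-index, now discarding the boundary terms by means of $e_{i,0}^{n}=e_{i,M}^{n}=0$. The argument involves no genuine obstacle; the only subtlety is the bookkeeping of the endpoint terms together with the recognition that the Dirichlet condition forces $e$ to vanish on $\partial\Omega_{h}$, everything else being routine telescoping.
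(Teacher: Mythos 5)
Your proposal is correct and follows essentially the same route as the paper: both arguments rewrite $a<\delta_{x}^{2}e^{n},e^{n}>_{x}$ as a double sum, perform discrete summation by parts in the $i$-index (the paper writes out the telescoping explicitly rather than invoking Abel summation by name), kill the endpoint contributions using $e_{0j}^{n}=e_{Mj}^{n}=0$ from the shared Dirichlet data, and identify the result with $-a\|\delta_{x}e^{n}\|_{L^{2}(\Omega)}^{2}$ via the norm definition in $(\ref{5})$, with $(\ref{72a})$ obtained by the symmetric argument in $j$. No gaps.
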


       \begin{proof}(of Lemma $\ref{l1}$).  Firstly, it is not hard to observe that
       \begin{equation*}
       \frac{a}{h^{2}}\left(e_{i,j+1}^{n}-2e_{ij}^{n}+e_{i,j-1}^{n}\right)e_{ij}^{n}=\frac{a}{h}(\delta_{y}e_{i,j+\frac{1}{2}}^{n}-\delta_{y}
       e_{i,j-\frac{1}{2}}^{n})e_{ij}^{n},
       \end{equation*}
       for $i=0,1,...,M$ and $j=1,2,...,M-1.$ We should prove only equation $(\ref{72}).$ The proof of relation $(\ref{72a})$ is similar.\\

       It follows from the definition of the operator $\delta_{x}^{2}$ and the scalar product $<\cdot,\cdot>_{x}$ given by
       $(\ref{4})$ and $(\ref{6}),$ respectively, that
       \begin{equation*}
        a<\delta_{x}^{2}e_{ij}^{n},e_{ij}^{n}>_{x}=h^{2}\underset{i,j=1}{\overset{M-1}\sum}\frac{a}{h^{2}}\left(e_{i+1,j}^{n}-2e_{ij}^{n}
        +e_{i-1,j}^{n}\right)e_{ij}^{n}=a\underset{i,j=1}{\overset{M-1}\sum}\left[\left(e_{i+1,j}^{n}-e_{ij}^{n}\right)e_{ij}^{n}-
        \left(e_{i,j}^{n}-e_{i-1,j}^{n}\right)e_{ij}^{n}\right]=
       \end{equation*}
       \begin{equation*}
        ah\underset{i,j=1}{\overset{M-1}\sum}\left[\left(\frac{e_{i+1,j}^{n}-e_{ij}^{n}}{h}\right)e_{ij}^{n}-\left(\frac{e_{i,j}^{n}-e_{i-1,j}^{n}}{h}\right)
        e_{ij}^{n}\right]=ah\underset{i,j=1}{\overset{M-1}\sum}\left[\left(\delta_{x}e_{i+\frac{1}{2},j}^{n}\right)e_{ij}^{n}-
        \left(\delta_{x}e_{i-\frac{1}{2},j}^{n}\right)e_{ij}^{n}\right]=
       \end{equation*}
       \begin{equation*}
        ah\underset{j=1}{\overset{M-1}\sum}\left\{\left((\delta_{x}e_{\frac{3}{2},j}^{n})e_{1j}^{n}-(\delta_{x}e_{\frac{1}{2},j}^{n})e_{1j}^{n}\right)+
        \left((\delta_{x}e_{\frac{5}{2},j}^{n})e_{2j}^{n}-(\delta_{x}e_{\frac{3}{2},j}^{n})e_{2j}^{n}\right)+\left((\delta_{x}e_{\frac{7}{2},j}^{n})e_{3j}^{n}-
        (\delta_{x}e_{\frac{5}{2},j}^{n})e_{5j}^{n}\right)+\cdots+\right.
       \end{equation*}
       \begin{equation*}
        \left.\left((\delta_{x}e_{M-\frac{3}{2},j}^{n})e_{M-2,j}^{n}-(\delta_{x}e_{M-\frac{5}{2},j}^{n})e_{M-2,j}^{n}\right)+
        \left((\delta_{x}e_{M-\frac{1}{2},j}^{n})e_{M-1,j}^{n}-(\delta_{x}e_{M-\frac{3}{2},j}^{n})e_{M-1,j}^{n}\right)\right\}=
       \end{equation*}
       \begin{equation*}
       ah\underset{j=1}{\overset{M-1}\sum}\left\{-\left(e_{2j}^{n}-e_{1j}^{n}\right)\delta_{x}e_{\frac{3}{2},j}^{n}-
        \left(e_{3j}^{n}-e_{2j}^{n}\right)\delta_{x}e_{\frac{5}{2},j}^{n}-\left(e_{4j}^{n}-e_{3j}^{n}\right)\delta_{x}e_{\frac{7}{2},j}^{n}-\cdots-\right.
       \end{equation*}
       \begin{equation}\label{73}
        \left.\left(e_{M-1,j}^{n}-e_{M-2,j}^{n}\right)\delta_{x}e_{M-\frac{3}{2},j}^{n}+(\delta_{x}e_{M-\frac{1}{2},j}^{n})e_{M-1,j}^{n}-
        (\delta_{x}e_{\frac{1}{2},j}^{n})e_{1,j}^{n}\right\}.
       \end{equation}
       It comes from the boundary condition $(\ref{24})$ that $e_{Mj}^{n}=e_{0j}^{n}=0.$ So $(\delta_{x}e_{M-\frac{1}{2},j}^{n})e_{Mj}^{n}=0$
       and $(\delta_{x}e_{\frac{1}{2},j}^{n})e_{0j}^{n}=0.$ This fact, together with equation $(\ref{73})$ provide
       \begin{equation*}
        h^{2}\underset{i,j=1}{\overset{M-1}\sum}\frac{a}{h^{2}}\left(e_{i+1,j}^{n}-2e_{ij}^{n}+e_{i-1,j}^{n}\right)e_{ij}^{n}=
        ah^{2}\underset{j,i=1}{\overset{M-1}\sum}\left\{-\left(\delta_{x}e_{\frac{3}{2},j}^{n}\right)^{2}-
        \left(\delta_{x}e_{\frac{5}{2},j}^{n}\right)^{2}-\cdots-\left(\delta_{x}e_{M-\frac{3}{2},j}^{n}\right)^{2}  \right.
       \end{equation*}
       \begin{equation*}
        \left.-\left(\frac{e_{Mj}^{n}-e_{M-1,j}^{n}}{h}\right)\delta_{x}e_{M-\frac{1}{2},j}^{n}-\left(\frac{e_{1j}^{n}-e_{0,j}^{n}}{h}\right)
        \delta_{x}e_{\frac{1}{2},j}^{n}\right\}=-ah^{2}\underset{i=0}{\overset{M-1}\sum}{\underset{j=1}{\overset{M-1}\sum}}\left(\delta_{x}
        e_{i+\frac{1}{2},j}^{n}\right)^{2}=-a\|\delta_{x}e^{n}\|_{L^{2}(\Omega)}^{2}.
       \end{equation*}
       \end{proof}

       \begin{proof} (of Theorem \ref{t1}).
        A combination of equations $(\ref{20a})$, $(\ref{35})$ and $(\ref{40})$ gives
       \begin{equation}\label{45}
        e_{ij}^{*}=e^{n}_{ij}+\frac{k}{2}\left(a\delta_{y}^{2}e_{ij}^{n}+f(u_{ij}^{n})-f(\overline{u}_{ij}^{n})\right)
        +\frac{k^{2}}{2}(3\rho_{ij}^{n}+\rho_{ij}^{\overline{*}})+O(k^{3}+kh^{2}),
       \end{equation}
       where $\rho_{ij}^{\alpha}$ is defined by $(\ref{20})$. Utilizing the definition of the operator $"\delta_{y}^{2}",$ equation $(\ref{45})$ is
       equivalent to
       \begin{equation}\label{46}
        e_{ij}^{*}=e^{n}_{ij}+\frac{k}{2}\left(\frac{a}{h^{2}}(e^{n}_{i,j+1}-2e^{n}_{ij}+e^{n}_{i,j-1})+f(u_{ij}^{n})
        -f(\overline{u}_{ij}^{n})\right)+\frac{k^{2}}{2}(3\rho_{ij}^{n}+\rho_{ij}^{\overline{*}})+O(k^{3}+kh^{2}).
       \end{equation}
        Of course, the aim of this study is to give a general picture of the stability analysis of the numerical scheme $(\ref{40})$-$(\ref{24})$. Since the formulas can become quite heavy, for the sake of readability, we should neglect the higher order terms in time step $k$ and grid spacing $h.$
        However, the truncation of the infinitesimal terms does not compromise the result on the stability analysis. Using this, equation $(\ref{46})$ becomes
        \begin{equation*}
        e_{ij}^{*}=e^{n}_{ij}+\frac{k}{2}\left(\frac{a}{h^{2}}(e^{n}_{i,j+1}-2e^{n}_{ij}+e^{n}_{i,j-1})+f(u_{ij}^{n})-f(\overline{u}_{ij}^{n})\right).
       \end{equation*}
        Taking the square, it holds
        \begin{equation*}
        (e_{ij}^{*})^{2}=(e^{n}_{ij})^{2}+k\left(\frac{a}{h^{2}}(e^{n}_{i,j+1}-2e^{n}_{ij}+e^{n}_{i,j-1})+f(u_{ij}^{n})-f(\overline{u}_{ij}^{n})\right)e^{n}_{ij}
       \end{equation*}
       \begin{equation}\label{47}+\frac{k^{2}}{4}\left(\frac{a}{h^{2}}(e^{n}_{i,j+1}-2e^{n}_{ij}+e^{n}_{i,j-1})+f(u_{ij}^{n})
       -f(\overline{u}_{ij}^{n})\right)^{2}.
       \end{equation}
        Now, using equality $2(a-b)b=a^{2}-b^{2}-(a-b)^{2}$ and inequality $(a\pm b)^{2}\leq2(a^{2}+b^{2}),$ for any $a,b\in\mathbb{R},$
        and by simple computations, it is not hard to see that
        \begin{equation}\label{49}
        (e^{n}_{i,j+1}-2e^{n}_{ij}+e^{n}_{i,j-1})^{2}\leq2\left[(e^{n}_{i,j+1}-e^{n}_{ij})^{2}+(e^{n}_{i,j-1}-e^{n}_{ij})^{2}\right];
        \end{equation}
        \begin{equation*}
        \left[\frac{a}{h^{2}}(e^{n}_{i,j+1}-2e^{n}_{ij}+e^{n}_{i,j-1})+f(u_{ij}^{n})-f(\overline{u}_{ij}^{n})\right]^{2}
        \leq2\left[\frac{2a^{2}}{h^{4}}\left[(e^{n}_{i,j+1}-e^{n}_{i,j})^{2}+(e^{n}_{i,j}-e^{n}_{i,j-1})^{2}\right]+\right.
        \end{equation*}
        \begin{equation}\label{50}
        \left.\left(f(u_{ij}^{n})-f(\overline{u}_{ij}^{n})\right)^{2}\right].
        \end{equation}
        $f\in\mathcal{C}^{1}(\mathbb{R})$ is a Lipschitz function, so there is a positive constant $C$ independent of the time step $k$ and
        the mesh size $h$ so that
        \begin{equation}\label{53}
        |f(u_{ij}^{n})-f(\overline{u}_{ij}^{n})|\leq C|e_{ij}^{n}|.
        \end{equation}
        From inequality $(\ref{53})$, it is easy to see that
        \begin{equation}\label{54}
        \left(f(u_{ij}^{n})-f(\overline{u}_{ij}^{n})\right)e_{ij}^{n}\leq C(e_{ij}^{n})^{2}\text{\,\,\,and\,\,\,}\left(f(u_{ij}^{n})-
        f(\overline{u}_{ij}^{n})\right)^{2}\leq C^{2}(e_{ij}^{n})^{2}.
        \end{equation}
        A combination of estimates $(\ref{47})$-$(\ref{54})$ results in
        \begin{equation*}
        (e_{ij}^{*})^{2}\leq (e^{n}_{ij})^{2}+k\left\{\frac{a}{h^{2}}(e^{n}_{i,j+1}-2e^{n}_{ij}+e^{n}_{i,j-1})e^{n}_{ij}+C(e^{n}_{ij})^{2}\right\}
        +\frac{k^{2}}{2}\left\{\frac{2a^{2}}{h^{2}}\left[\left(\delta_{y}e^{n}_{i,j+\frac{1}{2}}\right)^{2}+\left(\delta_{y}e^{n}_{i,j-\frac{1}{2}}
        \right)^{2}\right]+\right.
       \end{equation*}
        \begin{equation}\label{55}
       \left.C^{2}(e^{n}_{ij})^{2}\right\}.
       \end{equation}
       Summing this up from $i,j=1,2,...,M-1,$ and rearranging terms, this provides
       \begin{equation*}
        \underset{i,j=1}{\overset{M-1}\sum}(e_{ij}^{*})^{2}\leq \underset{i,j=1}{\overset{M-1}\sum}(e^{n}_{ij})^{2}+Ck\left(1
        +\frac{Ck}{2}\right)\underset{i,j=1}{\overset{M-1}\sum}(e^{n}_{ij})^{2}+\frac{ak}{h^{2}}
        \underset{i,j=1}{\overset{M-1}\sum}(e^{n}_{i,j+1}-2e^{n}_{ij}+e^{n}_{i,j-1})e^{n}_{ij}+
       \end{equation*}
       \begin{equation*}
       \frac{a^{2}k^{2}}{h^{2}}\underset{i,j=1}{\overset{M-1}\sum}\left[\left(\delta_{y}e^{n}_{i,j+\frac{1}{2}}\right)^{2}
       +\left(\delta_{y}e^{n}_{i,j-\frac{1}{2}}\right)^{2}\right],
       \end{equation*}
       which implies
       \begin{equation*}
        \underset{i,j=1}{\overset{M-1}\sum}(e_{ij}^{*})^{2}\leq \underset{i,j=1}{\overset{M-1}\sum}(e^{n}_{ij})^{2}+Ck\left(1
        +\frac{Ck}{2}\right)\underset{i,j=1}{\overset{M-1}\sum}(e^{n}_{ij})^{2}+\frac{ak}{h^{2}}
        \underset{i,j=1}{\overset{M-1}\sum}(e^{n}_{i,j+1}-2e^{n}_{ij}+e^{n}_{i,j-1})e^{n}_{ij}+
       \end{equation*}
       \begin{equation}\label{56}
       \frac{2a^{2}k^{2}}{h^{2}}\underset{i=1}{\overset{M-1}\sum}{\underset{j=0}{\overset{M-1}\sum}}\left(\delta_{y}e^{n}_{i,j+\frac{1}{2}}\right)^{2}.
       \end{equation}
       Multiplying both sides of inequality $(\ref{56})$ by $h^{2},$ and using equation $(\ref{72a})$ to get
       \begin{equation*}
        \|e^{*}\|_{L^{2}(\Omega)}^{2}\leq \|e^{n}\|_{L^{2}(\Omega)}^{2}+Ck\left(1+\frac{Ck}{2}\right)\|e^{n}\|_{L^{2}(\Omega)}^{2}-
        ak\|\delta_{y}e^{n}\|_{L^{2}(\Omega)}^{2}+\frac{2a^{2}k^{2}}{h^{2}}\|\delta_{y}e^{n}\|_{L^{2}(\Omega)}^{2}.
       \end{equation*}
       From the time step restriction $(\ref{44})$, $-1+\frac{2ak}{h^{2}}\leq0,$ utilizing this, it follows
       \begin{equation}\label{57}
        \|e^{*}\|_{L^{2}(\Omega)}^{2}\leq \|e^{n}\|_{L^{2}(\Omega)}^{2}+Ck\left(1+\frac{Ck}{2}\right)\|e^{n}\|_{L^{2}(\Omega)}^{2}.
       \end{equation}
       In way similar, combining equations $(\ref{33}),$ $(\ref{35})$ and $(\ref{42})$, it is not hard to show that
       \begin{equation}\label{59}
        \|e^{n+1}\|_{L^{2}(\Omega)}^{2}\leq \|e^{**}\|_{L^{2}(\Omega)}^{2}+Ck\left(1+\frac{Ck}{2}\right)\|e^{**}\|_{L^{2}(\Omega)}^{2}.
       \end{equation}
       We must find a similar estimate associated with $\|e^{**}\|_{L^{2}(\Omega)}^{2}$ and $\|e^{*}\|_{L^{2}(\Omega)}^{2}.$ Using equations $(\ref{31}),$ $(\ref{35})$ and $(\ref{41})$, it holds
       \begin{equation*}
        e_{ij}^{**}=e^{*}_{ij}+\frac{ak}{h^{2}}(e^{*}_{i+1,j}-2e^{*}_{ij}+e^{*}_{i-1,j}),
       \end{equation*}
        for $i=1,2,...,M-1,$ and $j=0,1,...,M.$ Taking the square, we obtain
       \begin{equation*}
        (e_{ij}^{**})^{2}=(e^{*}_{ij})^{2}+\frac{2ak}{h^{2}}(e^{*}_{i+1,j}-2e^{*}_{ij}+e^{*}_{i-1,j})e_{ij}^{*}+
        \frac{a^{2}k^{2}}{h^{4}}(e^{*}_{i+1,j}-2e^{*}_{ij}+e^{*}_{i-1,j})^{2},
       \end{equation*}
       which implies
       \begin{equation}\label{59a}
        (e_{ij}^{**})^{2}\leq(e^{*}_{ij})^{2}+\frac{2ak}{h^{2}}(e^{*}_{i+1,j}-2e^{*}_{ij}+e^{*}_{i-1,j})e_{ij}^{*}+
        \frac{2a^{2}k^{2}}{h^{4}}\left[(e^{*}_{i+1,j}-e^{*}_{ij})^{2}+(e^{*}_{i-1,j}-e^{*}_{ij})^{2}\right],
       \end{equation}
        Now, summing relation $(\ref{59a})$ for $i,j=1,2,...,M-1,$ and multiplying the obtained equation by $h^{2},$ this results in
       \begin{equation*}
        h^{2}\underset{i,j=1}{\overset{M-1}\sum}(e_{ij}^{**})^{2}\leq h^{2}\underset{i,j=1}{\overset{M-1}\sum}(e^{*}_{ij})^{2}+
        2ak\underset{i,j=1}{\overset{M-1}\sum}(e^{*}_{i+1,j}-2e^{*}_{ij}+e^{*}_{i-1,j})e_{ij}^{*}+
        2a^{2}k^{2}\underset{i,j=1}{\overset{M-1}\sum}\left[(\delta_{x}e^{*}_{i+\frac{1}{2},j})^{2}+(\delta_{x}e^{*}_{i-\frac{1}{2},j})^{2}\right],
       \end{equation*}
       which implies
       \begin{equation*}
        h^{2}\underset{i,j=1}{\overset{M-1}\sum}(e_{ij}^{**})^{2}\leq h^{2}\underset{i,j=1}{\overset{M-1}\sum}(e^{*}_{ij})^{2}+
        2ak\underset{i,j=1}{\overset{M-1}\sum}(e^{*}_{i+1,j}-2e^{*}_{ij}+e^{*}_{i-1,j})e_{ij}^{*}+
        4a^{2}k^{2}\underset{i=0}{\overset{M-1}\sum}{\underset{j=1}{\overset{M-1}\sum}}(\delta_{x}e^{*}_{i+\frac{1}{2},j})^{2}.
       \end{equation*}
       which is equivalent to
       \begin{equation*}
        \|e^{**}\|_{L^{2}(\Omega)}^{2}\leq\|e^{*}\|_{L^{2}(\Omega)}^{2}+2ak\underset{i,j=1}{\overset{M-1}\sum}
        (e^{*}_{i+1,j}-2e^{*}_{ij}+e^{*}_{i-1,j})e_{ij}^{*}+4a^{2}k^{2}h^{-2}\|\delta_{x}e^{*}\|_{L^{2}(\Omega)}^{2}.
       \end{equation*}
        Utilizing equality $(\ref{72}),$ this gives
        \begin{equation}\label{59b}
        \|e^{**}\|_{L^{2}(\Omega)}^{2}\leq\|e^{*}\|_{L^{2}(\Omega)}^{2}+2ak\left(-1+\frac{2ak}{h^{2}}\right)\|\delta_{x}e^{*}\|_{L^{2}(\Omega)}^{2}.
       \end{equation}
        It comes from the time step restriction $(\ref{44})$ that $\frac{2ak}{h^{2}}\leq1.$ So, estimate $(\ref{59b})$ provides
        \begin{equation}\label{59c}
        \|e^{**}\|_{L^{2}(\Omega)}^{2}\leq\|e^{*}\|_{L^{2}(\Omega)}^{2}.
       \end{equation}
       Now, plugging inequalities $(\ref{57})$, $(\ref{59})$ and $(\ref{59c}),$ straightforward calculations yield
       \begin{equation*}
        \|e^{n+1}\|_{L^{2}(\Omega)}^{2}\leq \left[1+Ck\left(1+\frac{Ck}{2}\right)\right]^{2}\|e^{n}\|_{L^{2}(\Omega)}^{2}=
        \|e^{n}\|_{L^{2}(\Omega)}^{2}+Ck\left[2+2Ck+C^{2}k^{2}+\frac{1}{4}C^{3}k^{3}\right]\|e^{n}\|_{L^{2}(\Omega)}^{2}.
       \end{equation*}
       Summing this up from $n=0,1,2,..,p-1,$ for any nonnegative integer $p$ satisfying $1\leq p\leq N,$ to get
       \begin{equation}\label{60}
        \|e^{p}\|_{L^{2}(\Omega)}^{2}\leq\|e^{0}\|_{L^{2}(\Omega)}^{2}+Ck\left[2+2Ck+C^{2}k^{2}+\frac{1}{4}C^{3}k^{3}\right]
        \underset{n=0}{\overset{p-1}\sum}\|e^{n}\|_{L^{2}(\Omega)}^{2}.
       \end{equation}
       It comes from the initial condition given in $(\ref{24})$, that $e^{0}_{ij}=0,$ for $0\leq i,j\leq M.$ Applying the discrete Gronwall
       Lemma, estimate $(\ref{60})$ gives
       \begin{equation}\label{61}
        \|e^{p}\|_{L^{2}(\Omega)}^{2}\leq\exp\left\{Ckp\left(2+2Ck+C^{2}k^{2}+\frac{1}{4}C^{3}k^{3}\right)\right\}.
       \end{equation}
       But $k=\frac{T}{N},$ so $Ckp=CT\frac{p}{N}\leq CT$ (since $p\leq N$). This fact, together with estimate $(\ref{61})$ result in
       \begin{equation*}
        \|e^{p}\|_{L^{2}(\Omega)}^{2}\leq\exp\left\{CT\left(2+2Ck+C^{2}k^{2}+\frac{1}{4}C^{3}k^{3}\right)\right\}.
       \end{equation*}
       Taking the square root, it is easy to see that
       \begin{equation}\label{62}
        \|e^{p}\|_{L^{2}(\Omega)}\leq\exp\left\{CT\left(1+Ck+\frac{1}{2}C^{2}k^{2}+\frac{1}{8}C^{3}k^{3}\right)\right\}.
       \end{equation}
       We have that $\|u^{p}\|_{L^{2}(\Omega)}-\|\overline{u}^{p}\|_{L^{2}(\Omega)}\leq\|u^{p}-\overline{u}^{p}\|_{L^{2}(\Omega)}=
       \|e^{p}\|_{L^{2}(\Omega)}.$ A combination of this inequality together with relation $(\ref{62})$ yields
       \begin{equation*}
        \|u^{p}\|_{L^{2}(\Omega)}\leq\|\overline{u}^{p}\|_{L^{2}(\Omega)}+\exp\left\{CT\left(1+Ck+\frac{1}{2}C^{2}k^{2}+\frac{1}{8}C^{3}k^{3}
        \right)\right\}\leq
       \end{equation*}
       \begin{equation*}
        \|\overline{u}^{p}\|_{L^{2}(\Omega)}+\exp\left\{CT\underset{l=0}{\overset{3}\sum}(Ck)^{l}\right\}.
       \end{equation*}
       Since $\overline{u}$ is the exact solution, the proof of Theorem $\ref{t1}$ is completed thanks to estimate $(\ref{43}).$
      \end{proof}

      \section{Convergence of the method}\label{sec4}

      This section deals with the error estimates of a three-level time-split MacCormack method $(\ref{40})$-$(\ref{24})$ applied to equations $(\ref{1})$-$(\ref{3}),$ under the time step restriction $(\ref{44}).$ We assume that the exact solution $\overline{u}$ satisfies estimate $(\ref{43})$. We recall that
      \begin{equation}\label{65}
      \mathcal{U}_{h}=\{u_{ij}^{n},\text{\,}n=0,1,2,...,N;\text{\,}i,j=0,1,2,...,M\},
      \end{equation}
      is the space of grid functions defined on $\Omega_{h}\times\Omega_{k},$ where $\Omega_{k}=\{t^{n},\text{\,}0\leq n\leq N\}$ and\\ $\Omega_{h}=\{(x_{i},y_{j}),\text{\,}0\leq i,j\leq M\}\cap\Omega.$\\

      Let introduce the following discrete norms
      \begin{equation*}
        \||u|\|_{L^{\infty}(0,T;L^{2}(\Omega))}=\underset{0\leq n\leq N}{\max}\|u^{n}\|_{L^{2}(\Omega)};\text{\,\,\,\,\,\,\,}
      \||u|\|_{L^{2}(0,T;L^{2}(\Omega))}=\left(k\underset{n=0}{\overset{N}\sum}\|u^{n}\|_{L^{2}(\Omega)}^{2}\right)^{\frac{1}{2}},
      \end{equation*}
      and
      \begin{equation}\label{66}
      \||u|\|_{L^{1}(0,T;L^{2}(\Omega))}=k\underset{n=0}{\overset{N}\sum}\|u^{n}\|_{L^{2}(\Omega)};\text{\,\,\,for\,\,\,}u\in\mathcal{U}_{h}.
      \end{equation}

      \begin{theorem}\label{t2}
       Suppose $u$ be the solution provided by a three-level time-split MacCormack approach $(\ref{40})$-$(\ref{24})$.
       Under the time step restriction $(\ref{44}),$ the error term $e=u-\overline{u},$ satisfies
      \begin{equation*}
      \||e|\|_{L^{\infty}(0,T;L^{2}(\Omega))}\leq O(k+kh^{2})=O(k+h^{4}).
      \end{equation*}
      \end{theorem}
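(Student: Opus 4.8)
The plan is to run the same energy argument as in Theorem~\ref{t1}, the decisive difference being that the consistency (local truncation) terms discarded there must now be kept and estimated. First I would form the three error identities for the substeps $n\to *$, $*\to **$ and $**\to n+1$ by subtracting the exact-solution relations $(\ref{20a})$, $(\ref{31})$ and $(\ref{33})$ from the scheme $(\ref{40})$, $(\ref{41})$ and $(\ref{42})$. For the first $y$-substep this is exactly the computation leading to $(\ref{46})$, but with the remainder retained, namely
\[
e_{ij}^{*}=e_{ij}^{n}+\frac{k}{2}\Bigl(a\delta_{y}^{2}e_{ij}^{n}+f(u_{ij}^{n})-f(\overline{u}_{ij}^{n})\Bigr)+R_{ij}^{(1)},
\]
where $R^{(1)}=\frac{k^{2}}{2}(3\rho^{n}+\rho^{\overline{*}})+O(k^{3}+kh^{2})$ gathers the $k^{2}$ correction of $(\ref{20})$ produced by the symmetric splitting together with the Taylor remainder. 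Analogous identities hold for the $x$-substep (remainder $R^{(2)}$, built from the $\delta_{x}^{2}(\delta_{x}^{2}u)$ correction in $(\ref{31})$) and for the final $y$-substep (remainder $R^{(3)}$, involving the $\gamma$ terms of $(\ref{34})$).

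The first genuinely new step is to bound these remainders in the discrete $L^{2}(\Omega)$-norm. Since $\rho$ and $\gamma$ (relations $(\ref{20})$ and $(\ref{34})$) are assembled from $\delta_{y}^{2}(\delta_{y}^{2}u)$, $\delta_{y}^{2}f(u)$, $\delta_{y}^{2}u\,f'(u)$ and $f(u)f'(u)$, and the $x$-remainder from $\delta_{x}^{2}(\delta_{x}^{2}u)$, the regularity hypothesis $(\ref{43})$ — which controls $\overline{u}$ up to fourth spatial derivatives — together with $f\in\mathcal{C}^{1}$ bounds each of these by a constant independent of $k$ and $h$. This yields $\|R^{(\ell)}\|_{L^{2}(\Omega)}\le C(k^{2}+kh^{2})$ for $\ell=1,2,3$, the quantitative consistency estimate that the stability proof did not need.

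The energy estimate then proceeds exactly as in the passage $(\ref{56})$--$(\ref{57})$ and at $(\ref{59c})$. For each $y$-substep I would square the identity, sum over $i,j=1,\dots,M-1$, multiply by $h^{2}$, and invoke Lemma~\ref{l1} to turn $a\langle\delta_{y}^{2}e,e\rangle$ into $-a\|\delta_{y}e\|_{L^{2}(\Omega)}^{2}$; the CFL restriction $(\ref{44})$ absorbs the positive $\frac{2a^{2}k^{2}}{h^{2}}\|\delta_{y}e\|_{L^{2}(\Omega)}^{2}$ against the negative $ak\|\delta_{y}e\|_{L^{2}(\Omega)}^{2}$, while the reaction difference is handled by $(\ref{53})$. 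The $x$-substep uses $(\ref{72})$ and $(\ref{44})$ to reproduce $(\ref{59c})$ with an added remainder. The only new contributions are the cross term $2(e,R^{(\ell)})$ and the square $(R^{(\ell)},R^{(\ell)})$; the former I would split by Young's inequality as $2(e,R^{(\ell)})\le k\|e\|_{L^{2}(\Omega)}^{2}+k^{-1}\|R^{(\ell)}\|_{L^{2}(\Omega)}^{2}$ and the latter absorb directly. Chaining the three substeps yields a one-step recursion of the form $\|e^{n+1}\|_{L^{2}(\Omega)}^{2}\le(1+Ck)\|e^{n}\|_{L^{2}(\Omega)}^{2}+C\,k^{-1}\max_{\ell}\|R^{(\ell)}\|_{L^{2}(\Omega)}^{2}$; here the weight $k$ in Young's inequality is essential, as it trades one power of $k$ and lets the $O(k^{4})$ size of $\|R\|^{2}$ produce a per-step term of the right order.

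Finally I would sum this recursion over $n=0,\dots,p-1$, use $e^{0}=0$ from the initial data $(\ref{24})$, and apply the discrete Gronwall lemma as in $(\ref{60})$--$(\ref{61})$; the $N=T/k$ summation removes one power of $k$ and, after a square root, delivers $\max_{0\le p\le N}\|e^{p}\|_{L^{2}(\Omega)}\le C(k+kh^{2})$, which under the scaling $k=O(h^{2})$ built into $(\ref{44})$ coincides with the announced $O(k+h^{4})$. The step I expect to be the main obstacle is the honest accounting of the consistency terms: one must verify that the $k^{2}$ corrections arising from the symmetric arrangement $L_{y}(k/2)L_{x}(k)L_{y}(k/2)$ are genuinely $O(k^{2})$ in $L^{2}(\Omega)$ under $(\ref{43})$, and that at the intermediate starred levels the reaction differences $f(u^{*})-f(\overline{u}^{*})$ and $f(u^{**})-f(\overline{u}^{**})$ are dominated by $C\|e^{*}\|_{L^{2}(\Omega)}$ and $C\|e^{**}\|_{L^{2}(\Omega)}$ through the Lipschitz bound $(\ref{53})$, so that the recursion closes at the claimed order without an uncontrolled power of $k$ or $h$.
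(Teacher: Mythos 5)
Your proposal follows essentially the same route as the paper's own proof: the paper forms exactly these three error identities with the $\rho$, $\gamma$ and $O(k^{3}+kh^{2})$ remainders retained, bounds them via Taylor expansions of the difference operators under the regularity assumption $(\ref{43})$ (its Lemmas $\ref{l2}$ and $\ref{l3}$), and then reruns the Theorem $\ref{t1}$ energy argument with Lemma $\ref{l1}$, the CFL restriction $(\ref{44})$, and Young's inequality weighted by $k$, before summing, applying the discrete Gronwall lemma, and taking the square root. The consistency accounting you single out as the main obstacle is precisely what Lemmas $\ref{l2}$ and $\ref{l3}$ supply, so your plan matches the paper's argument step for step, including the final identification of the resulting bound with $O(k+kh^{2})$ under the scaling $k\sim h^{2}$.
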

         The proof of Theorem $\ref{t2}$ requires some intermediate results (namely Lemmas $\ref{l1}$, $\ref{l2}$ and $\ref{l3}$).
       \begin{lemma}\label{l2}
       Consider $v\in H^{4}(\Omega),$ be a function satisfying $v|_{[x_{i},x_{i+1}]}\in\mathcal{C}^{6}[x_{i},x_{i+1}],$ for $i=0,1,2,...,M-1.$
       Then, it holds
       \begin{equation*}
        \frac{1}{h^{2}}(v_{i+1}-2v_{i}+v_{i-1})-v_{2x,i}=\frac{h^{2}}{12}v_{4x,i}-\frac{h^{4}}{720}\left[v_{6x}(\theta^{(3)}_{i})+v_{6x}
        (\theta^{(4)}_{i})\right],\text{\,\,\,\,for\,\,\,\,}i=1,2,...,M-1,
       \end{equation*}
       where $\theta^{(4)}_{i}\in (x_{i-1},x_{i})$, $\theta^{(3)}_{i}\in (x_{i},x_{i+1})$ and $v_{mx}$ denotes the derivative of order $m$ of $v.$ Furthermore,
       for $i=2,3,...,M-2,$
       \begin{equation*}
        \frac{1}{h^{4}}(v_{i+2}-4v_{i+1}+6v_{i}-4v_{i-1}+v_{i-2})-v_{4x,i}=h^{2}\left\{\frac{1}{720}\left[v_{6x}(\theta^{(1)}_{i})+v_{6x}
        (\theta^{(2)}_{i})\right]+\frac{241}{3220}\left[v_{6x}(\theta^{(3)}_{i})+v_{6x}(\theta^{(4)}_{i})\right]\right\},
       \end{equation*}
       where $\theta^{(2)}_{i}\in (x_{i-2},x_{i-1})$, $\theta^{(4)}_{i}\in (x_{i-1},x_{i}),$ $\theta^{(3)}_{i}\in (x_{i},x_{i+1})$ and
       $\theta^{(1)}_{i}\in (x_{i+1},x_{i+2}).$
       \end{lemma}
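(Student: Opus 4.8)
The plan is to prove both identities purely from Taylor's theorem with the Lagrange form of the remainder, using the piecewise $\mathcal{C}^6$ hypothesis to confine each remainder to a single mesh subinterval $(x_j,x_{j+1})$. For the first (three-point) identity I would expand $v_{i+1}$ and $v_{i-1}$ about $x_i$ to fifth order, writing the sixth-order remainder in Lagrange form; since $v$ is $\mathcal{C}^6$ on $[x_i,x_{i+1}]$ and on $[x_{i-1},x_i]$ separately, the forward remainder point lies in $(x_i,x_{i+1})$ (this is $\theta_i^{(3)}$) and the backward one in $(x_{i-1},x_i)$ (this is $\theta_i^{(4)}$). Adding the two expansions cancels all odd-order terms $v_{x},v_{3x},v_{5x}$ and leaves $v_{i+1}-2v_i+v_{i-1}=h^2 v_{2x,i}+\tfrac{h^4}{12}v_{4x,i}+\tfrac{h^6}{720}\big[v_{6x}(\theta_i^{(3)})+v_{6x}(\theta_i^{(4)})\big]$; dividing by $h^2$ and subtracting $v_{2x,i}$ yields the first formula.

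For the second (five-point) identity the stencil spans the four subintervals between $x_{i-2}$ and $x_{i+2}$, on which $v$ is only piecewise smooth, so a single expansion of $v_{i\pm2}$ about $x_i$ is not legitimate and would not pin the remainder to one subinterval. Instead I would use the factorization of the fourth difference as the three-point second difference of the three-point second difference: setting $w_j:=h^{-2}(v_{j+1}-2v_j+v_{j-1})$, one has $h^{-4}(v_{i+2}-4v_{i+1}+6v_i-4v_{i-1}+v_{i-2})=h^{-2}(w_{i+1}-2w_i+w_{i-1})$, and then apply the first identity at the three nodes $i-1,i,i+1$. Each such application produces remainders in the two subintervals adjacent to that node, so the outer subintervals $(x_{i+1},x_{i+2})$ and $(x_{i-2},x_{i-1})$ are reached only once, giving $\theta_i^{(1)}$ and $\theta_i^{(2)}$ with weight $\tfrac{1}{720}$, while the two inner subintervals are reached by several of the three applications.

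To finish I would collect, subinterval by subinterval, the $O(h^2)$ contributions carried by the $v_{6x}$ terms: the outer intervals contribute exactly $\tfrac{h^2}{720}v_{6x}(\theta_i^{(1)})$ and $\tfrac{h^2}{720}v_{6x}(\theta_i^{(2)})$, whereas the inner intervals accumulate not only the remainder-of-remainder pieces but also the $h^2 v_{6x}$ terms produced when the outer second difference acts on the main $v_{2x}$ and $\tfrac{h^2}{12}v_{4x}$ parts of each $w_j$; summing these weights produces the common inner coefficient and merges the several inner evaluation points into single points $\theta_i^{(3)}\in(x_i,x_{i+1})$ and $\theta_i^{(4)}\in(x_{i-1},x_i)$. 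A useful consistency check is that the four resulting weights must add, with the factor two coming from symmetry, to $\tfrac{1}{6}$, the exact leading truncation constant of the five-point fourth-derivative formula.

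The step I expect to be the genuine obstacle is precisely this last merging inside the two inner subintervals: there the several $v_{6x}$ evaluations enter with mixed signs — a $-2$ weight from the central node of the outer second difference against the positive weights coming from the re-expanded $v_{2x}$ and $v_{4x}$ terms — so collapsing them to one point via the intermediate value theorem is not automatic and needs either the continuity of $v_{6x}$ on the closed subinterval together with a signed mean-value argument, or simply keeping the contributions as a finite sum of $v_{6x}$ values on that subinterval. A secondary technical point is that, under only $H^4(\Omega)$ global regularity, $v_{4x}$ and $v_{5x}$ may be one-sided at the interior knots; the odd-order cancellations and the identification of $v_{4x,i}$ must then be read with the appropriate one-sided derivatives, or under the implicit assumption that these derivatives are continuous across the nodes.
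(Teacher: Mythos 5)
Your plan is correct and, for the five-point identity, takes a genuinely different route from the paper. The paper proves the three-point identity by expanding $v_i$ about $x_{i+1}$ and about $x_{i-1}$ and then re-expanding all the derivatives $v_{x,i\pm1},\dots,v_{5x,i\pm1}$ about $x_i$; for the five-point identity it expands $v_{i+2}-2v_{i+1}+v_i$ about $x_{i+1}$, $v_i-2v_{i-1}+v_{i-2}$ about $x_{i-1}$, combines with $4v_i$, and again re-expands the derivatives about $x_i$. Your iterated-second-difference factorization $h^{-4}\delta^4 v_i=h^{-2}\delta^2 w_i$ with $w_j=h^{-2}\delta^2 v_j$ achieves the same confinement of each Lagrange remainder to a single mesh subinterval with far less bookkeeping, and your weight count is right: the outer subintervals receive a single $v_{6x}$ evaluation with weight $\tfrac{1}{720}$, while each inner subinterval collects $\tfrac{1}{24}+\tfrac{1}{24}+\tfrac{1}{720}-\tfrac{2}{720}=\tfrac{59}{720}$, and $2\cdot\tfrac{1}{720}+2\cdot\tfrac{59}{720}=\tfrac16$ passes your consistency check. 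The obstacle you single out is the genuine one, and the paper does not resolve it either: it simply declares "without loss of generality" that all the inner evaluation points coincide, which is not a valid argument precisely because the weights on each inner subinterval have mixed signs, so the intermediate value theorem does not by itself collapse the combination to a single point. Your fallback of keeping the remainder as a finite signed sum of $v_{6x}$ values on each subinterval is entirely adequate, since the lemma is only used (in Lemma 4.2) to bound $|\rho_{ij}^n|$ in absolute value.

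Two discrepancies with the statement as printed are worth flagging, and both are errors in the statement rather than in your argument. First, your direct expansion gives $+\tfrac{h^4}{720}\bigl[v_{6x}(\theta^{(3)}_i)+v_{6x}(\theta^{(4)}_i)\bigr]$, not the minus sign displayed in the lemma; the paper's own intermediate identity (its equation (112), whose bracket evaluates to $-\tfrac{1}{720}$ times the sum) also yields the plus sign, so the stated sign is a slip. Second, the inner coefficient in the five-point formula should be $\tfrac{59}{720}$ (which is what the paper's own equation (119) produces as well), not $\tfrac{241}{3220}$; the latter does not make the weights sum to $\tfrac16$. Neither error affects the downstream use of the lemma, but your proof will, correctly, not reproduce the constants as stated.
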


       \begin{proof}(of Lemma $\ref{l2}$)
       Expanding the Taylor series about $x_{i}$ with grid spacing $h$ using both forward and backward differences to obtain
       \begin{equation}\label{76}
        v_{i+2}=v_{i+1}+hv_{x,i+1}+\frac{h^{2}}{2}v_{2x,i+1}+\frac{h^{3}}{6}v_{3x,i+1}+\frac{h^{4}}{24}v_{4x,i+1}+\frac{h^{5}}{120}v_{5x,i+1}
        +\frac{h^{6}}{720}v_{6x}(\theta^{(1)}_{i}),
       \end{equation}
       where $\theta^{(1)}_{i}\in (x_{i+1},x_{i+2});$
       \begin{equation}\label{77}
        v_{i-2}=v_{i-1}-hv_{x,i-1}+\frac{h^{2}}{2}v_{2x,i-1}-\frac{h^{3}}{6}v_{3x,i-1}+\frac{h^{4}}{24}v_{4x,i-1}-\frac{h^{5}}{120}v_{5x,i-1}
        +\frac{h^{6}}{720}v_{6x}(\theta^{(2)}_{i}),
       \end{equation}
       where $\theta^{(2)}_{i}\in (x_{i-2},x_{i-1});$
       \begin{equation}\label{78}
        v_{i}=v_{i+1}-hv_{x,i+1}+\frac{h^{2}}{2}v_{2x,i+1}-\frac{h^{3}}{6}v_{3x,i+1}+\frac{h^{4}}{24}v_{4x,i+1}-\frac{h^{5}}{120}v_{5x,i+1}
        +\frac{h^{6}}{720}v_{6x}(\theta^{(3)}_{i}),
       \end{equation}
       where $\theta^{(3)}_{i}\in (x_{i},x_{i+1});$
       \begin{equation}\label{79}
        v_{i}=v_{i-1}+hv_{x,i-1}+\frac{h^{2}}{2}v_{2x,i-1}+\frac{h^{3}}{6}v_{3x,i-1}+\frac{h^{4}}{24}v_{4x,i-1}+\frac{h^{5}}{120}v_{5x,i-1}
        +\frac{h^{6}}{720}v_{6x}(\theta^{(4)}_{i}),
       \end{equation}
       where $\theta^{(4)}_{i}\in (x_{i-1},x_{i}).$\\

       In way similar, applying the Taylor expansion for both derivative and higher order derivatives of $v$ to obtain
       \begin{equation}\label{104}
        v_{x,i+1}=v_{x,i}+hv_{2x,i}+\frac{h^{2}}{2}v_{3x,i}+\frac{h^{3}}{6}v_{4x,i}+\frac{h^{4}}{24}v_{5x,i}+\frac{h^{5}}{120}v_{6x}(\theta^{(5)}_{i}),
       \end{equation}
       where $\theta^{(5)}_{i}\in (x_{i},x_{i+1});$
       \begin{equation}\label{105}
        v_{x,i-1}=v_{x,i}-hv_{2x,i}+\frac{h^{2}}{2}v_{3x,i}-\frac{h^{3}}{6}v_{4x,i}+\frac{h^{4}}{24}v_{5x,i}-\frac{h^{5}}{120}v_{6x}(\theta^{(6)}_{i}),
       \end{equation}
       where $\theta^{(6)}_{i}\in (x_{i-1},x_{i});$
       \begin{equation}\label{106}
        v_{2x,i+1}=v_{2x,i}+hv_{3x,i}+\frac{h^{2}}{2}v_{4x,i}+\frac{h^{3}}{6}v_{5x,i}+\frac{h^{4}}{24}v_{6x}(\theta^{(7)}_{i}),
       \end{equation}
       where $\theta^{(7)}_{i}\in(x_{i},x_{i+1});$
       \begin{equation}\label{107}
        v_{2x,i-1}=v_{2x,i}-hv_{3x,i}+\frac{h^{2}}{2}v_{4x,i}-\frac{h^{3}}{6}v_{5x,i}+\frac{h^{4}}{24}v_{6x}(\theta^{(8)}_{i}),
       \end{equation}
       where $\theta^{(8)}_{i}\in (x_{i-1},x_{i});$
       \begin{equation}\label{108}
        v_{3x,i+1}=v_{3x,i}+hv_{4x,i}+\frac{h^{2}}{2}v_{5x,i}+\frac{h^{3}}{6}v_{6x}(\theta^{(9)}_{i}),\text{\,\,\,}v_{3x,i-1}=v_{3x,i}-hv_{4x,i}
        +\frac{h^{2}}{2}v_{5x,i}-\frac{h^{3}}{6}v_{6x}(\theta^{(10)}_{i}),
       \end{equation}
       where $\theta^{(9)}_{i}\in (x_{i},x_{i+1}),$ $\theta^{(10)}_{i}\in (x_{i-1},x_{i});$
       \begin{equation}\label{110}
        v_{4x,i+1}=v_{4x,i}+hv_{5x,i}+\frac{h^{2}}{2}v_{6x}(\theta^{(11)}_{i}),\text{\,\,\,}v_{4x,i-1}=v_{4x,i}-hv_{5x,i}
        +\frac{h^{2}}{2}v_{6x}(\theta^{(12)}_{i}),
       \end{equation}
       where $\theta^{(11)}_{i}\in (x_{i},x_{i+1}),$ $\theta^{(12)}_{i}\in (x_{i-1},x_{i});$
       \begin{equation}\label{111}
        v_{5x,i+1}=v_{5x,i}+hv_{6x}(\theta^{(13)}_{i}),\text{\,\,\,}v_{5x,i-1}=v_{5x,i}-hv_{6x}(\theta^{(12)}_{i}),
       \end{equation}
       where $\theta^{(13)}_{i}\in (x_{i},x_{i+1}),$ $\theta^{(14)}_{i}\in (x_{i-1},x_{i}).$\\

       Now, adding equations $(\ref{78})$-$(\ref{79})$ side by side, this gives
       \begin{equation*}
        2v_{i}=v_{i+1}+v_{i-1}-h(v_{x,i+1}-v_{x,i-1})+\frac{h^{2}}{2}(v_{2x,i+1}+v_{2x,i-1})-\frac{h^{3}}{6}(v_{3x,i+1}-v_{3x,i-1})+
       \end{equation*}
       \begin{equation}\label{109}
        \frac{h^{4}}{24}(v_{4x,i+1}+v_{4x,i-1})-\frac{h^{5}}{120}(v_{5x,i+1}-v_{5x,i-1})+\frac{h^{6}}{720}(v_{6x}(\theta^{(3)}_{i})
        +v_{6x}(\theta^{(4)}_{i})).
       \end{equation}
        Subtracting $(\ref{105})$ from $(\ref{104})$ and adding side by side $(\ref{106})$ and $(\ref{107}),$ using also equations $(\ref{108}),$ $(\ref{110})$ and $(\ref{112}),$ simple calculations provide
       \begin{equation}\label{114}
        v_{x,i+1}-v_{x,i-1}=2hv_{2x,i}+\frac{h^{3}}{3}v_{4x,i}+\frac{h^{5}}{720}\left(v_{6x}(\theta^{(5)}_{i})+v_{6x}(\theta^{(6)}_{i})\right);
       \end{equation}
       \begin{equation}\label{115}
        v_{2x,i+1}+v_{2x,i-1}=2v_{2x,i}+h^{2}v_{4x,i}+\frac{h^{4}}{24}\left(v_{6x}(\theta^{(7)}_{i})+v_{6x}(\theta^{(8)}_{i})\right);
       \end{equation}
       \begin{equation}\label{116}
        v_{3x,i+1}-v_{3x,i-1}=2hv_{4x,i}+\frac{h^{3}}{6}\left(v_{6x}(\theta^{(9)}_{i})+v_{6x}(\theta^{(10)}_{i})\right);
       \end{equation}
       \begin{equation}\label{117}
       v_{4x,i+1}+v_{4x,i-1}=2v_{4x,i}+\frac{h^{2}}{2}\left(v_{6x}(\theta^{(11)}_{i})+v_{6x}(\theta^{(12)}_{i})\right);\text{\,\,}
        v_{5x,i+1}-v_{5x,i-1}=h\left(v_{6x}(\theta^{(13)}_{i})+v_{6x}(\theta^{(14)}_{i})\right).
       \end{equation}
       Combining equations $(\ref{104})$-$(\ref{117})$, straightforward computations result in
       \begin{equation*}
        2v_{i}=v_{i+1}+v_{i-1}-h^{2}v_{2x,i}-\frac{h^{4}}{12}v_{4x,i}+h^{6}\left\{\frac{1}{720}(v_{6x}(\theta^{(3)}_{i})+v_{6x}(\theta^{(4)}_{i}))-
        \frac{1}{120}(v_{6x}(\theta^{(5)}_{i})+v_{6x}(\theta^{(6)}_{i}))+\right.
       \end{equation*}
       \begin{equation*}
        \frac{1}{48}(v_{6x}(\theta^{(7)}_{i})+v_{6x}(\theta^{(8)}_{i}))-\frac{1}{36}(v_{6x}(\theta^{(9)}_{i})+v_{6x}(\theta^{(10)}_{i}))+
        \frac{1}{48}(v_{6x}(\theta^{(11)}_{i})+v_{6x}(\theta^{(12)}_{i}))-
       \end{equation*}
       \begin{equation}\label{112}
        \left.\frac{1}{120}(v_{6x}(\theta^{(13)}_{i})+v_{6x}(\theta^{(14)}_{i}))\right\}.
       \end{equation}
       Since $\theta^{(3)}_{i},\theta^{(5)}_{i},\theta^{(7)}_{i},\theta^{(9)}_{i},\theta^{(11)}_{i},\theta^{(13)}_{i}\in(x_{i},x_{i+1})$ and $\theta^{(4)}_{i},\theta^{(6)}_{i},\theta^{(8)}_{i},\theta^{(10)}_{i},\theta^{(12)}_{i},\theta^{(14)}_{i}\in(x_{i-1},x_{i}),$
       without loss of generality, we can assume that $\theta^{(3)}_{i}=\theta^{(5)}_{i}=\theta^{(7)}_{i}=\theta^{(9)}_{i}=
       \theta^{(11)}_{i}=\theta^{(13)}_{i}$ and $\theta^{(4)}_{i}=\theta^{(6)}_{i}=
       \theta^{(8)}_{i}=\theta^{(10)}_{i}=\theta^{(12)}_{i}=\theta^{(14)}_{i}.$ Using this,     relation $(\ref{112})$ becomes
       \begin{equation*}
        \frac{1}{h^{2}}(v_{i+1}-2v_{i}+v_{i-1})-v_{2x,i}=\frac{h^{2}}{12}v_{4x,i}-\frac{h^{4}}{720}\left[v_{6x}(\theta^{(3)}_{i})
        +v_{6x}(\theta^{(4)}_{i})\right].
       \end{equation*}
       This completes the proof of the first item of Lemma $\ref{l2}.$ Now, let prove the second item of Lemma $\ref{l2}.$\\

         Plugging equations $(\ref{76})$ and $(\ref{78}),$ $(\ref{77})$ and $(\ref{79}),$ $(\ref{78})$ and $(\ref{79}),$ respectively, it is
         not hard to see that
       \begin{equation}\label{100}
        v_{i+2}-2v_{i+1}+v_{i}=h^{2}v_{2x,i+1}+\frac{h^{4}}{12}v_{4x,i+1}+\frac{h^{6}}{720}\left(v_{6x}(\theta^{(1)}_{i})+(\theta^{(3)}_{i})\right);
       \end{equation}
       \begin{equation}\label{102}
        v_{i}-2v_{i-1}+v_{i-2}=h^{2}v_{2x,i-1}+\frac{h^{4}}{12}v_{4x,i-1}+\frac{h^{6}}{720}\left(v_{6x}(\theta^{(2)}_{i})+v_{6x}(\theta^{(3)}_{i})\right);
       \end{equation}
       and
       \begin{equation*}
        4v_{i}=2(v_{i+1}+v_{i-1})-2h(v_{x,i+1}-v_{x,i-1})+h^{2}(v_{2x,i+1}+v_{2x,i-1}) -\frac{h^{3}}{3}(v_{3x,i+1}-v_{3x,i-1})+
       \end{equation*}
       \begin{equation}\label{101}
        \frac{h^{4}}{12}(v_{4x,i+1}+v_{4x,i-1})-\frac{h^{5}}{60}(v_{5x,i+1}-v_{5x,i-1})+\frac{h^{6}}{720}\left(v_{6x}(\theta^{(3)}_{i})
        +v_{6x}(\theta^{(4)}_{i})\right).
       \end{equation}
       A combination of equations $(\ref{100})$-$(\ref{101})$ yields
       \begin{equation*}
        v_{i+2}-4v_{i+1}+6v_{i}-4v_{i-1}+v_{i-2}=-2h(v_{x,i+1}-v_{x,i-1})+2h^{2}(v_{2x,i+1}+v_{2x,i-1})-\frac{h^{3}}{3}(v_{3x,i+1}-v_{3x,i-1})+
       \end{equation*}
       \begin{equation}\label{103}
        \frac{h^{4}}{6}(v_{4x,i+1}+v_{4x,i-1})-\frac{h^{5}}{60}(v_{5x,i+1}-v_{5x,i-1})+\frac{h^{6}}{720}\left[v_{6x}(\theta^{(1)}_{i})
        +v_{6x}(\theta^{(2)}_{i})+3\left(v_{6x}(\theta^{(3)}_{i})+v_{6x}(\theta^{(4)}_{i})\right)\right].
       \end{equation}
       Substituting $(\ref{114})$-$(\ref{117})$ into $(\ref{103})$,simple computations result in
       \begin{equation*}
        v_{i+2}-4v_{i+1}+6v_{i}-4v_{i-1}+v_{i-2}=-4h^{2}v_{2x,i}-\frac{2h^{4}}{3}v_{4x,i}+4h^{2}v_{2x,i}+2h^{4}v_{4x,i}-\frac{2h^{4}}{3}v_{4x,i}+
        h^{6}\left\{\frac{1}{720}\left[v_{6x}(\theta^{(1)}_{i})\right.\right.
       \end{equation*}
       \begin{equation*}
        \left.+v_{6x}(\theta^{(2)}_{i})+3\left(v_{6x}(\theta^{(3)}_{i})+v_{6x}(\theta^{(4)}_{i})\right)\right]-\frac{1}{60}
        \left(v_{6x}(\theta^{(5)}_{i})+v_{6x}(\theta^{(6)}_{i})\right)+\frac{1}{12}\left(v_{6x}(\theta^{(7)}_{i})+v_{6x}(\theta^{(8)}_{i})\right)-
       \end{equation*}
       \begin{equation*}
        \left.\frac{1}{18}\left(v_{6x}(\theta^{(9)}_{i})+v_{6x}(\theta^{(10)}_{i})\right)+\frac{1}{12}\left(v_{6x}(\theta^{(11)}_{i})+
        v_{6x}(\theta^{(12)}_{i})\right)-\frac{1}{60}\left(v_{6x}(\theta^{(13)}_{i})+v_{6x}(\theta^{(14)}_{i})\right)\right\},
       \end{equation*}
       which is equivalent to
       \begin{equation*}
        v_{i+2}-4v_{i+1}+6v_{i}-4v_{i-1}+v_{i-2}=h^{4}v_{4x,i}+h^{6}\left\{\frac{1}{720}\left[v_{6x}(\theta^{(1)}_{i})+v_{6x}(\theta^{(2)}_{i})+
        3\left(v_{6x}(\theta^{(3)}_{i})+v_{6x}(\theta^{(4)}_{i})\right)\right]-\right.
       \end{equation*}
       \begin{equation*}
        \frac{1}{60}\left(v_{6x}(\theta^{(5)}_{i})+v_{6x}(\theta^{(6)}_{i})\right)+\frac{1}{12}\left(v_{6x}(\theta^{(7)}_{i})
        +v_{6x}(\theta^{(8)}_{i})\right)-\frac{1}{18}\left(v_{6x}(\theta^{(9)}_{i})+v_{6x}(\theta^{(10)}_{i})\right)
       \end{equation*}
       \begin{equation}\label{119}
        \left.+\frac{1}{12}\left(v_{6x}(\theta^{(11)}_{i})+v_{6x}(\theta^{(12)}_{i})\right)-\frac{1}{60}\left(v_{6x}(\theta^{(13)}_{i})
        +v_{6x}(\theta^{(14)}_{i})\right)\right\}.
       \end{equation}
       Assuming that $\theta^{(3)}_{i}=\theta^{(5)}_{i}=\theta^{(7)}_{i}=\theta^{(9)}_{i}=\theta^{(11)}_{i}=\theta^{(13)}_{i}$ and $\theta^{(4)}_{i}=\theta^{(6)}_{i}=\theta^{(8)}_{i}=\theta^{(10)}_{i}=\theta^{(12)}_{i}=\theta^{(14)}_{i},$ equation
       $(\ref{119})$ becomes
       \begin{equation*}
        v_{i+2}-4v_{i+1}+6v_{i}-4v_{i-1}+v_{i-2}=h^{4}v_{4x,i}+h^{6}\left\{\frac{1}{720}\left[v_{6x}(\theta^{(1)}_{i})+v_{6x}(\theta^{(2)}_{i})+
        \right]+\frac{241}{3220}\left[v_{6x}(\theta^{(3)}_{i})+v_{6x}(\theta^{(4)}_{i})\right]\right\},
       \end{equation*}
        which is equivalent to
        \begin{equation*}
        \frac{1}{h^{4}}(v_{i+2}-4v_{i+1}+6v_{i}-4v_{i-1}+v_{i-2})-v_{4x,i}=h^{2}\left\{\frac{1}{720}\left[v_{6x}(\theta^{(1)}_{i})
        +v_{6x}(\theta^{(2)}_{i})+\right]+\frac{241}{3220}\left[v_{6x}(\theta^{(3)}_{i})+v_{6x}(\theta^{(4)}_{i})\right]\right\}.
       \end{equation*}
       This ends the proof of Lemma $\ref{l2}.$
       \end{proof}

       \begin{lemma}\label{l3}
       The term $\rho_{ij}^{n}$ given by equation $(\ref{20})$ can be bounded as
       \begin{equation}\label{120}
        |\rho_{ij}^{n}|\leq \widehat{C}_{1}\left[1+\widehat{C}_{2}h^{2}+\widehat{C}_{3}h^{4}\right],
       \end{equation}
       where $\widehat{C}_{l},$ $l=1,2,3,$ are positive constant independent of the time step $k$ and the mesh size $h.$
       \end{lemma}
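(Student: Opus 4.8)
The plan is to treat $\rho_{ij}^{n}$ term by term, recognising that each of the four contributions in $(\ref{20})$ (with $\alpha=n$) is either a pointwise algebraic expression in the exact solution or a finite-difference operator approximating a spatial derivative of it. First I would record the four pieces separately: the fourth-difference term $a^{2}\delta_{y}^{2}(\delta_{y}^{2}u_{ij}^{n})$, the two second-difference terms $a\delta_{y}^{2}f(u_{ij}^{n})$ and $a\delta_{y}^{2}u_{ij}^{n}\,f'(u_{ij}^{n})$, and the purely pointwise term $f(u_{ij}^{n})f'(u_{ij}^{n})$. The idea is to use Lemma $\ref{l2}$ to rewrite every difference quotient as (exact derivative) $+$ ($h^{2}$-order correction) $+$ ($h^{4}$-order remainder), so that after collecting powers of $h$ the claimed bound falls out.

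Concretely, I would apply the second item of Lemma $\ref{l2}$ (in the $y$ variable) to the five-point stencil $\delta_{y}^{2}(\delta_{y}^{2}u_{ij}^{n})=h^{-4}(u_{i,j+2}^{n}-4u_{i,j+1}^{n}+6u_{ij}^{n}-4u_{i,j-1}^{n}+u_{i,j-2}^{n})$, obtaining $\delta_{y}^{2}(\delta_{y}^{2}u_{ij}^{n})=u_{4y,ij}^{n}+h^{2}R_{1}$ with $R_{1}$ a bounded combination of sixth-order derivatives. For the two second-difference terms I would invoke the first item of Lemma $\ref{l2}$, applied respectively to $v=f(\overline{u})$ and $v=\overline{u}$, to get $\delta_{y}^{2}f(u_{ij}^{n})=(f(u))_{yy,ij}^{n}+\tfrac{h^{2}}{12}(f(u))_{4y,ij}^{n}+h^{4}R_{2}$ and $\delta_{y}^{2}u_{ij}^{n}=u_{yy,ij}^{n}+\tfrac{h^{2}}{12}u_{4y,ij}^{n}+h^{4}R_{3}$. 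Substituting these expansions into $(\ref{20})$ and grouping by powers of $h$, the $h^{0}$-part is exactly $\tfrac18[a^{2}u_{4y}+a(f(u))_{yy}+au_{yy}f'(u)+f(u)f'(u)]$ (that is, $\tfrac18\,\overline{u}_{2t}$ evaluated at the grid point), the $h^{2}$-part collects $\tfrac{a}{12}(f(u))_{4y}$, $\tfrac{a}{12}u_{4y}f'(u)$ and the leading remainder $R_{1}$ of the fourth-difference stencil, and the $h^{4}$-part collects the sixth-derivative remainders $R_{2},R_{3}$.

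It then remains to bound each group. The derivatives $u_{yy},u_{4y},u_{6y}$ and the compositions $(f(u))_{yy},(f(u))_{4y},(f(u))_{6y}$ are controlled, uniformly in the grid point, by the regularity hypothesis $(\ref{43})$ together with a Sobolev embedding in space (the spaces $H^{3}(\Omega),H^{4}(\Omega)$ appearing in $(\ref{43})$ embed into spaces of continuous functions in dimension two), while $f$ and $f'$ are bounded because $f\in\mathcal{C}^{1}(\mathbb{R})$ is Lipschitz. Denoting by $\widehat{C}_{1}$ the resulting uniform bound on the $h^{0}$-group and absorbing the numerical constants $\tfrac{1}{12},\tfrac{1}{720},\dots$ into $\widehat{C}_{2}$ and $\widehat{C}_{3}$, the triangle inequality yields $|\rho_{ij}^{n}|\le\widehat{C}_{1}[1+\widehat{C}_{2}h^{2}+\widehat{C}_{3}h^{4}]$, which is precisely $(\ref{120})$.

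The step I expect to be the real obstacle is the passage from the integral (Sobolev) regularity in $(\ref{43})$ to genuine pointwise bounds on derivatives of order up to six. Estimate $(\ref{43})$ only furnishes $L^{2}$-in-space control up to the fourth derivative, so bounding $u_{6y}$ and the high-order derivatives of $f(\overline{u})$ pointwise formally requires more smoothness than is literally assumed; in practice one appeals to the standing assumption that the problem $(\ref{1})$--$(\ref{3})$ admits a smooth solution, and to differentiability of $f$ beyond $\mathcal{C}^{1}$ in order that $(f(u))_{yy}$ and its higher analogues be meaningful. Making this rigorous --- or, alternatively, reorganising the argument so that only the norms actually present in $(\ref{43})$ are used, for instance by keeping the remainders in a discrete $L^{2}$ norm rather than pointwise --- is where the care is required; the remaining bookkeeping of the powers of $h$ is routine.
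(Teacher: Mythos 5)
Your proposal follows essentially the same route as the paper: the paper likewise writes out the four contributions to $\rho_{ij}^{n}$ (with $\overline{u}$ in place of $u$), applies the second item of Lemma \ref{l2} to the five-point stencil and the first item to the two second-difference terms, and then collects powers of $h$ to obtain the bound. The ``real obstacle'' you flag --- that $(\ref{43})$ does not literally furnish pointwise control of sixth-order derivatives of $\overline{u}$ and $f\circ\overline{u}$ --- is precisely the point the paper passes over by simply asserting that $\overline{u}(x,\cdot,t)|_{[y_{j},y_{j+1}]}$ and $f\circ\overline{u}(x,\cdot,t)|_{[y_{j},y_{j+1}]}$ belong to $\mathcal{C}^{6}([y_{j},y_{j+1}])$, so your awareness of it is if anything more careful than the original.
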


       \begin{proof} (of Lemma $\ref{l3}$).
       It comes from relation $(\ref{20})$ that
       \begin{equation*}
        \rho_{ij}^{n}=\frac{1}{8}\left[a^{2}\delta_{y}^{2}(\delta_{y}^{2}\overline{u}^{n}_{ij})+a\delta_{y}^{2}f(\overline{u}_{ij}^{n})
        +a\delta_{y}^{2}\overline{u}^{n}_{ij}f^{'}(\overline{u}_{ij}^{n})+f(\overline{u}_{ij}^{n})f^{'}(\overline{u}_{ij}^{n})\right].
       \end{equation*}
        From the definition of the operator $"\delta_{y}^{2}",$ this is equivalent to
        \begin{equation*}
        \rho_{ij}^{n}=\frac{1}{8}\left[\frac{a^{2}}{h^{4}}\left(\overline{u}_{i,j+2}^{n}-4\overline{u}_{i,j+1}^{n}
        +6\overline{u}_{ij}^{n}-4\overline{u}_{i,j-1}^{n}+\overline{u}_{i,j-2}\right)+\frac{a}{h^{2}}
        \left[f(\overline{u}_{i,j+1}^{n})-2f(\overline{u}_{ij}^{n})+f(\overline{u}_{i,j-1}^{n})+\right.\right.
       \end{equation*}
       \begin{equation*}
        \left.\left.\left(\overline{u}^{n}_{i,j+1}-2\overline{u}^{n}_{ij}+\overline{u}^{n}_{i,j-1}\right)f^{'}(\overline{u}_{ij}^{n})
        \right]+f(\overline{u}_{ij}^{n})f^{'}(\overline{u}_{ij}^{n})\right].
       \end{equation*}
       Combining this together with Lemma $\ref{l2},$ it is easy to see that
       \begin{equation*}
        \rho_{ij}^{n}=\frac{1}{8}\left\{a^{2}\left[\overline{u}_{4y,ij}^{n}+h^{2}\left(\frac{1}{720}\left[\overline{u}^{n}_{6y}(x_{i},\theta^{(1)}_{j})
        +\overline{u}^{n}_{6y}(x_{i},\theta^{(2)}_{j})\right]+\frac{241}{3220}\left[\overline{u}^{n}_{6y}(x_{i},\theta^{(3)}_{j})+
        \overline{u}^{n}_{6y}(x_{i},\theta^{(4)}_{j})\right]\right)\right]+\right.
       \end{equation*}
       \begin{equation*}
        \left.a\left[fo\overline{u})^{n}_{2y,ij}+\frac{h^{2}}{12}fo\overline{u})^{n}_{4y,ij}-\frac{h^{4}}{720}
        \left(fo\overline{u})^{n}_{6y}(x_{i},\theta^{(3)}_{j})+fo\overline{u})^{n}_{6y}(x_{i},\theta^{(4)}_{j})
        \right)+\left(\overline{u}^{n}_{2y,ij}+\frac{h^{2}}{12}\overline{u}^{n}_{4y,ij}-\right.\right.\right.
       \end{equation*}
        \begin{equation*}
        \left.\left.\left.\frac{h^{4}}{720}\left(\overline{u}^{n}_{6y}(x_{i},\theta^{(3)}_{j})+\overline{u}^{n}_{6y}(x_{i},\theta^{(4)}_{j})
        \right)\right)f^{'}(\overline{u}_{ij}^{n})\right]+f(\overline{u}_{ij}^{n})f^{'}(\overline{u}_{ij}^{n})\right\}.
        \end{equation*}
        On the other hand, $\overline{u}(x,\cdot,t)|_{[y_{j},y_{j+1}]}$, $fo\overline{u}(x,\cdot,t)|_{[y_{j},y_{j+1}]}\in
        \mathcal{C}^{6}([y_{j},y_{j+1}])$, for every $x\in(0,1)$, $t\in(0,T)$ and $j=0,1,2,...,M-1;$ $\||\overline{u}|\|_{L^{\infty}
        (0,T;L^{2}(\Omega))}\leq\widetilde{C}$ (according to estimate $(\ref{43})$) and $f^{'}$ (the derivative of $f$) is continuous.
        Taking the absolute value of $\rho_{ij}^{n},$ there exist positive constants $\widehat{C}_{l},$ $l=1,2,3,$ independent of the time step $k$
        and the mesh grid $h$ so that
         \begin{equation*}
        |\rho_{ij}^{n}|\leq \widehat{C}_{1}\left[1+\widehat{C}_{2}h^{2}+\widehat{C}_{3}h^{4}\right].
       \end{equation*}
       This completes the proof of Lemma $\ref{l3}.$
       \end{proof}

        Armed with the results provided by Lemmas $\ref{l1}$, $\ref{l2}$ and $\ref{l3},$ we are ready to prove Theorem $\ref{t2}.$

      \begin{proof} (of Theorem $\ref{t2}$)
        We recall that the error term provided by the scheme $(\ref{40})$-$(\ref{24})$ is denoted by $e_{ij}^{n}=u_{ij}^{n}-\overline{u}_{ij}^{n},$
        where $\overline{u}$ satisfies equations $(\ref{20a}),$ $(\ref{31})$ and $(\ref{33})$ and $u$ is given by relations
        $(\ref{40})$-$(\ref{24})$. So, it comes from equation $(\ref{46})$ that
       \begin{equation*}
        e_{ij}^{*}=e^{n}_{ij}+\frac{k}{2}\left(\frac{a}{h^{2}}(e^{n}_{i,j+1}-2e^{n}_{ij}+e^{n}_{i,j-1})+f(u_{ij}^{n})-f(\overline{u}_{ij}^{n})\right)
        +\frac{1}{2}k^{2}(3\rho_{ij}^{n}+\rho_{ij}^{\overline{*}})+O(k^{3}+kh^{2}),
       \end{equation*}
       which is equivalent to
       \begin{equation*}
        e_{ij}^{*}=e^{n}_{ij}+\frac{k}{2}\left(\frac{a}{h^{2}}(e^{n}_{i,j+1}-2e^{n}_{ij}+e^{n}_{i,j-1})+f(u_{ij}^{n})-f(\overline{u}_{ij}^{n})\right)
        +\frac{1}{2}k^{2}(3\rho_{ij}^{n}+\rho_{ij}^{\overline{*}})+C_{r}(k^{3}+kh^{2}),
       \end{equation*}
        where $C_{r}$ is a parameter that does not depend neither on the time step $k$ nor the grid spacing $h$ and $\rho_{ij}^{\alpha}$ is
        defined by $(\ref{20})$. Taking the square, it is not hard to see that
        \begin{equation*}
        (e_{ij}^{*})^{2}=(e^{n}_{ij})^{2}+k\left\{\frac{a}{h^{2}}\left(e^{n}_{i,j+1}-2e^{n}_{ij}+e^{n}_{i,j-1}\right)e^{n}_{ij}+\left(f(u_{ij}^{n})-
        f(\overline{u}_{ij}^{n})\right)e^{n}_{ij}\right\}+k^{2}(3\rho_{ij}^{n}+\rho_{ij}^{\overline{*}})e^{n}_{ij}+2C_{r}(k^{3}+kh^{2})e^{n}_{ij}
        \end{equation*}
       \begin{equation*}
       +\frac{k^{2}}{4}\left(\frac{a}{h^{2}}(e^{n}_{i,j+1}-2e^{n}_{ij}+e^{n}_{i,j-1})+f(u_{ij}^{n})-f(\overline{u}_{ij}^{n})\right)^{2}+
       \frac{k^{3}}{2}\left(\frac{a}{h^{2}}(e^{n}_{i,j+1}-2e^{n}_{ij}+e^{n}_{i,j-1})+f(u_{ij}^{n})-f(\overline{u}_{ij}^{n})\right)(3\rho_{ij}^{n}
       \end{equation*}
       \begin{equation*}
        +\rho_{ij}^{\overline{*}})+C_{r}\left(\frac{a}{h^{2}}(e^{n}_{i,j+1}-2e^{n}_{ij}+e^{n}_{i,j-1})+f(u_{ij}^{n})-f(\overline{u}_{ij}^{n})\right)
        (k^{4}+k^{2}h^{2})+\frac{k^{4}}{4}(3\rho_{ij}^{n}+\rho_{ij}^{\overline{*}})^{2}+C_{r}^{2}(k^{3}+kh^{2})^{2}+
       \end{equation*}
       \begin{equation}\label{67}
        C_{r}(k^{5}+k^{3}h^{2})(3\rho_{ij}^{n}+\rho_{ij}^{\overline{*}}).
       \end{equation}
         Applying the inequalities: $2ab\leq a^{2}+b^{2},$ $(a\pm b)^{2}\leq 2(a^{2}+b^{2})$ and $(a\pm b\pm c)^{2}\leq3(a^{2}+b^{2}+c^{2}),$
         for every $a,b,c\in\mathbb{R},$ together with the time step restriction $(\ref{44})$ (that is, $2ak\leq h^{2}$), relation $(\ref{67})$ becomes
        \begin{equation*}
        (e_{ij}^{*})^{2}\leq(e^{n}_{ij})^{2}+k\left\{\frac{a}{h^{2}}\left(e^{n}_{i,j+1}-2e^{n}_{ij}+e^{n}_{i,j-1}\right)e^{n}_{ij}
        +\left(f(u_{ij}^{n})-f(\overline{u}_{ij}^{n})\right)e^{n}_{ij}\right\}+k^{2}|(3\rho_{ij}^{n}+\rho_{ij}^{\overline{*}})e^{n}_{ij}|+2C_{r}(k^{3}
        +kh^{2})|e^{n}_{ij}|
        \end{equation*}
       \begin{equation*}
       +\frac{k^{2}}{4}\left\{\left[\frac{a}{h^{2}}(e^{n}_{i,j+1}-2e^{n}_{ij}+e^{n}_{i,j-1})\right]^{2}+\left[f(u_{ij}^{n})-f(\overline{u}_{ij}^{n})
       \right]^{2}+2\frac{a}{h^{2}}(e^{n}_{i,j+1}-2e^{n}_{ij}+e^{n}_{i,j-1})\left[f(u_{ij}^{n})-f(\overline{u}_{ij}^{n})\right]\right\}+
       \end{equation*}
       \begin{equation*}
        \frac{k^{2}}{4}\left(|e^{n}_{i,j+1}-2e^{n}_{ij}+e^{n}_{i,j-1}|+\frac{h^{2}}{a}|f(u_{ij}^{n})-f(\overline{u}_{ij}^{n})|\right)
        |3\rho_{ij}^{n}+\rho_{ij}^{\overline{*}}|+\frac{C_{r}}{2}\left(|e^{n}_{i,j+1}-2e^{n}_{ij}+e^{n}_{i,j-1}|+\frac{h^{2}}{a}|f(u_{ij}^{n})-\right.
       \end{equation*}
       \begin{equation*}
        \left.f(\overline{u}_{ij}^{n})|\right)(k^{3}+kh^{2})+\frac{k^{4}}{2}(3\rho_{ij}^{n}+\rho_{ij}^{\overline{*}})^{2}+2C_{r}^{2}(k^{3}+kh^{2})^{2}.
       \end{equation*}
       which implies
       \begin{equation*}
        (e_{ij}^{*})^{2}\leq(e^{n}_{ij})^{2}+k\left\{\frac{a}{h^{2}}\left(e^{n}_{i,j+1}-2e^{n}_{ij}+e^{n}_{i,j-1}\right)e^{n}_{ij}
        +\left(f(u_{ij}^{n})-f(\overline{u}_{ij}^{n})\right)e^{n}_{ij}\right\}+k^{2}|(3\rho_{ij}^{n}+\rho_{ij}^{\overline{*}})e^{n}_{ij}|+2C_{r}(k^{3}
        +kh^{2})|e^{n}_{ij}|
        \end{equation*}
       \begin{equation*}
       +\frac{k^{2}}{4}\left\{\frac{2a^{2}}{h^{4}}\left[(e^{n}_{i,j+1}-e^{n}_{ij})^{2}+(e^{n}_{ij}-e^{n}_{i,j-1})^{2}\right]+\left[f(u_{ij}^{n})-
       f(\overline{u}_{ij}^{n})\right]^{2}+\frac{a}{h^{2}}\left[(e^{n}_{i,j+1}-2e^{n}_{ij}+e^{n}_{i,j-1})^{2}+\right.\right.
       \end{equation*}
       \begin{equation*}
        \left.\left.(f(u_{ij}^{n})-f(\overline{u}_{ij}^{n}))^{2}\right]\right\}+\frac{k^{2}}{4}\left(|e^{n}_{i,j+1}-2e^{n}_{ij}+e^{n}_{i,j-1}|
        +\frac{h^{2}}{a}|f(u_{ij}^{n})-f(\overline{u}_{ij}^{n})|\right)|3\rho_{ij}^{n}+\rho_{ij}^{\overline{*}}|+\frac{C_{r}}{2}\left(|e^{n}_{i,j+1}
        -2e^{n}_{ij}\right.
       \end{equation*}
       \begin{equation*}
        \left.+e^{n}_{i,j-1}|+\frac{h^{2}}{a}|f(u_{ij}^{n})-f(\overline{u}_{ij}^{n})|\right)(k^{3}+kh^{2})+\frac{k^{4}}{2}(3\rho_{ij}^{n}
        +\rho_{ij}^{\overline{*}})^{2}+2C_{r}^{2}(k^{3}+kh^{2})^{2}.
       \end{equation*}
       \begin{equation*}
        \leq(e^{n}_{ij})^{2}+k\left\{\frac{a}{h^{2}}\left(e^{n}_{i,j+1}-2e^{n}_{ij}+e^{n}_{i,j-1}\right)e^{n}_{ij}+\left(f(u_{ij}^{n})-
        f(\overline{u}_{ij}^{n})\right)e^{n}_{ij}\right\}+\frac{1}{2}\left[k^{3}(3\rho_{ij}^{n}+\rho_{ij}^{\overline{*}})^{2}+
        8C_{r}^{2}(k^{5}+kh^{4})\right]
        \end{equation*}
       \begin{equation*}
       +k(e^{n}_{ij})^{2}+\frac{k^{2}}{4}\left\{\frac{2a^{2}}{h^{2}}\left[(\delta_{y}e^{n}_{i,j+\frac{1}{2}})^{2}+(\delta_{y}e^{n}_{i,j
       -\frac{1}{2}})^{2}\right]+\left[f(u_{ij}^{n})-f(\overline{u}_{ij}^{n})\right]^{2}+\frac{a}{h^{2}}\left[3\left((e^{n}_{i,j+1})^{2}+4(e^{n}_{ij})^{2}
       +(e^{n}_{i,j-1})^{2}\right)+\right.\right.
       \end{equation*}
       \begin{equation*}
        \left.\left.(f(u_{ij}^{n})-f(\overline{u}_{ij}^{n}))^{2}\right]\right\}+\frac{k^{3}}{4}(3\rho_{ij}^{n}+\rho_{ij}^{\overline{*}})^{2}
        +\frac{3k}{8}\left[(e^{n}_{i,j+1})^{2}+4(e^{n}_{ij})^{2}+(e^{n}_{i,j-1})^{2}\right]+\frac{kh^{4}}{8a^{2}}(f(u_{ij}^{n})
        -f(\overline{u}_{ij}^{n}))^{2}
       \end{equation*}
       \begin{equation*}
        +C_{r}^{2}(k^{\frac{3}{2}}+k^\frac{1}{2}h^{2})^{2}+\frac{3k}{8}\left((e^{n}_{i,j+1})^{2}+4(e^{n}_{ij})^{2}+(e^{n}_{i,j-1})^{2}\right)+        \frac{kh^{4}}{8a^{2}}(f(u_{ij}^{n})-f(\overline{u}_{ij}^{n}))^{2}+\frac{k^{4}}{2}(3\rho_{ij}^{n}+\rho_{ij}^{\overline{*}})^{2}
       \end{equation*}
       \begin{equation}\label{68}
        +2C_{r}^{2}(k^{3}+kh^{2})^{2}.
       \end{equation}
        From estimates $(\ref{53})$-$(\ref{54})$, we have that
        \begin{equation*}
        |f(u_{ij}^{n})-f(\overline{u}_{ij}^{n})|\leq C|e_{ij}^{n}|;\text{\,\,\,}\left(f(u_{ij}^{n})-f(\overline{u}_{ij}^{n})\right)e_{ij}^{n}\leq C(e_{ij}^{n})^{2}\text{\,\,\,and\,\,\,}\left(f(u_{ij}^{n})-f(\overline{u}_{ij}^{n})\right)^{2}\leq C^{2}(e_{ij}^{n})^{2}.
        \end{equation*}
        This fact, together with estimate $(\ref{68})$ results in
       \begin{equation*}
        (e_{ij}^{*})^{2}\leq (e^{n}_{ij})^{2}+k\left\{\frac{a}{h^{2}}\left(e^{n}_{i,j+1}-2e^{n}_{ij}+e^{n}_{i,j-1}\right)e^{n}_{ij}
        +C(e_{ij}^{n})^{2}\right\}+\frac{1}{2}\left[k^{3}(3\rho_{ij}^{n}+\rho_{ij}^{\overline{*}})^{2}+8C_{r}^{2}(k^{5}+kh^{4})\right]
        \end{equation*}
       \begin{equation*}
       +k(e^{n}_{ij})^{2}+\frac{k^{2}}{4}\left\{\frac{2a^{2}}{h^{2}}\left[(\delta_{y}e^{n}_{i,j+\frac{1}{2}})^{2}+(\delta_{y}e^{n}_{i,j
       -\frac{1}{2}})^{2}\right]+C^{2}(e_{ij}^{n})^{2}+\frac{a}{h^{2}}\left[3\left((e^{n}_{i,j+1})^{2}+4(e^{n}_{ij})^{2}+(e^{n}_{i,j-1})^{2}\right)
       +\right.\right.
       \end{equation*}
       \begin{equation*}
        \left.\left.C^{2}(e_{ij}^{n})^{2}\right]\right\}+\frac{k^{3}}{4}(3\rho_{ij}^{n}+\rho_{ij}^{\overline{*}})^{2}
        +\frac{3k}{4}\left[(e^{n}_{i,j+1})^{2}+4(e^{n}_{ij})^{2}+(e^{n}_{i,j-1})^{2}\right]+C^{2}\frac{kh^{4}}{4a^{2}}(e_{ij}^{n})^{2}+
        2C_{r}^{2}(k^{3}+kh^{4})
       \end{equation*}
       \begin{equation*}
        +\frac{k^{4}}{2}(3\rho_{ij}^{n}+\rho_{ij}^{\overline{*}})^{2}+2C_{r}^{2}(k^{3}+kh^{2})^{2}.
       \end{equation*}
       Utilizing the time step restriction $(\ref{44})$, $\frac{2ak}{h^{2}}\leq1,$ this implies
       \begin{equation*}
        (e_{ij}^{*})^{2}\leq(e^{n}_{ij})^{2}+\frac{ak}{h^{2}}\left(e^{n}_{i,j+1}-2e^{n}_{ij}+e^{n}_{i,j-1}\right)e^{n}_{ij}+\frac{ak}{4}
        \left[(\delta_{x}e^{n}_{i,j+\frac{1}{2}})^{2}+(\delta_{y}e^{n}_{i,j-\frac{1}{2}})^{2}\right]+k\left[1+C+\frac{C^{2}}{8}+\right.
        \end{equation*}
       \begin{equation*}
       \left.\frac{C^{2}k}{4}+\frac{C^{2}h^{4}}{4a^{2}}\right](e_{ij}^{n})^{2}+\frac{9k}{8}\left[(e_{i,j+1}^{n})^{2}+4(e_{ij}^{n})^{2}
       +(e_{i,j-1}^{n})^{2}\right]+2kC_{r}^{2}(k^{2}+h^{4})+4k^{2}C_{r}^{2}(k^{4}+h^{4})+
       \end{equation*}
       \begin{equation*}
        8kC_{r}^{2}(k^{4}+h^{4})+\frac{k^{3}}{4}(3+2k)(3\rho_{ij}^{n}+\rho_{ij}^{\overline{*}})^{2}.
       \end{equation*}
       Summing this up from $i,j=1,2,...M-1,$ provides

       \begin{equation*}
        \underset{i,j=1}{\overset{M-1}\sum}(e_{ij}^{*})^{2}\leq\underset{i,j=1}{\overset{M-1}\sum}(e^{n}_{ij})^{2}
        +k\left[\frac{a}{h^{2}}\underset{i,j=1}{\overset{M-1}\sum}\left(e^{n}_{i,j+1}-2e^{n}_{ij}+e^{n}_{i,j-1}\right)e^{n}_{ij}\right]
        +\frac{ak}{4}\underset{i,j=1}{\overset{M-1}\sum}\left[(\delta_{y}e^{n}_{i,j+\frac{1}{2}})^{2}+(\delta_{y}e^{n}_{i,j-\frac{1}{2}})^{2}\right]+
        \end{equation*}
       \begin{equation*}
       k\left[1+C+\frac{C^{2}}{8}+\frac{C^{2}k}{4}+\frac{C^{2}h^{4}}{4a^{2}}\right]\underset{i,j=1}{\overset{M-1}\sum}(e_{ij}^{n})^{2}
       +\frac{9k}{8}\underset{i,j=1}{\overset{M-1}\sum}\left[(e_{i,j+1}^{n})^{2}+4(e_{ij}^{n})^{2}+(e_{i,j-1}^{n})^{2}\right]
       \end{equation*}
       \begin{equation}\label{56a}
        +\frac{k^{3}}{4}(3+2k)\underset{i,j=1}{\overset{M-1}\sum}[9(\rho_{ij}^{n})^{2}+(\rho_{ij}^{\overline{*}})^{2}]+2C_{r}^{2}k
        \underset{i,j=1}{\overset{M-1}\sum}\left[k^{2}+4k^{4}+5h^{4}+2k(k^{4}+h^{4})\right].
       \end{equation}
        Combining the boundary condition $(\ref{24})$, $e^{n}_{Mj}=e^{n}_{0j}=0,$ for all $j=0,1,...,M,$ Lemmas $\ref{l1}$ and $\ref{l3},$
        and multiplying both sides of inequality $(\ref{56a})$ by $h^{2},$ straightforward computations yield
       \begin{equation*}
        h^{2}\underset{i,j=1}{\overset{M-1}\sum}(e_{ij}^{*})^{2}\leq h^{2}\underset{i,j=1}{\overset{M-1}\sum}(e^{n}_{ij})^{2}
        -ak\|\delta_{y}e^{n}\|_{L^{2}(\Omega)}^{2}+\frac{ak}{2}\|\delta_{y}e^{n}\|_{L^{2}(\Omega)}^{2}+
        k\left[\frac{31}{4}+C+\frac{C^{2}}{8}+\frac{C^{2}k}{4}+\frac{C^{2}h^{4}}{4a^{2}}\right]
        \end{equation*}
       \begin{equation*}
       h^{2}\underset{i,j=1}{\overset{M-1}\sum}(e_{ij}^{n})^{2}+\frac{k^{3}h^{2}}{4}(3+2k)(M-1)^{2}\left[9\widehat{C}_{1}^{2}
       (1+\widehat{C}_{2}h^{2}+\widehat{C}_{3}h^{4})^{2}+\widehat{C}_{1}^{2}(1+\widehat{C}_{2}h^{2}+\widehat{C}_{3}h^{4})^{2}\right]
       \end{equation*}
       \begin{equation*}
        +2C_{r}^{2}kh^{2}(M-1)^{2}\left[k^{2}+4k^{4}+5h^{4}+2k(k^{4}+h^{4})\right].
       \end{equation*}
        Since $h=\frac{1}{M},$ $k\leq 1+k^{2}$ and $h^{2}\leq 1+h^{4},$ this becomes
        \begin{equation*}
        h^{2}\underset{j,i=1}{\overset{M-1}\sum}(e_{ij}^{*})^{2}\leq h^{2}\underset{j,i=1}{\overset{M-1}\sum}(e^{n}_{ij})^{2}
        -\frac{ak}{2}\|\delta_{y}e^{n}\|_{L^{2}(\Omega)}^{2}+k\left[\frac{31}{2}+C+\frac{C^{2}}{8}+\frac{C^{2}k}{4}+\frac{C^{2}h^{4}}{4a^{2}}\right]
        h^{2}\underset{j,i=1}{\overset{M-1}\sum}(e_{ij}^{n})^{2}+
        \end{equation*}
       \begin{equation*}
       \frac{5\widehat{C}_{1}^{2}k^{3}}{2}(5+2k^{2})(1+\widehat{C}_{2}+\widehat{C}_{2}h^{4}+\widehat{C}_{3}h^{4})^{2}+2C_{r}^{2}k
       \left[k^{2}+4k^{4}+5h^{4}+2k(k^{4}+h^{4})\right].
       \end{equation*}
       which implies
       \begin{equation*}
        \|e^{*}\|_{L^{2}(\Omega)}^{2}\leq\|e^{n}\|_{L^{2}(\Omega)}^{2}+\widehat{C}_{4}\left\{k\left[1+k+h^{4}\right]
        \|e^{n}\|_{L^{2}(\Omega)}^{2}+k^{3}\left[1+k^{2}+h^{4}+h^{6}+h^{8}+\right.\right.
       \end{equation*}
       \begin{equation}\label{121}
        \left.\left.k^{2}h^{2}+k^{2}h^{4}+k^{2}h^{6}+k^{2}h^{8}\right]+k\left(1+k\right)(k^{4}+h^{4})\right\},
       \end{equation}
       where we absorbed all the constants into a constant $\widehat{C}_{4}.$\\

        Similarly, one shows that
         \begin{equation*}
        \|e^{**}\|_{L^{2}(\Omega)}^{2}\leq\|e^{*}\|_{L^{2}(\Omega)}^{2}+\widehat{C}_{5}\left\{k\left[1+k+h^{4}\right]
        \|e^{*}\|_{L^{2}(\Omega)}^{2}+k^{3}\left[1+k^{2}+h^{4}+h^{6}+h^{8}+\right.\right.
       \end{equation*}
       \begin{equation}\label{122}
        \left.\left.k^{2}h^{2}+k^{2}h^{4}+k^{2}h^{6}+k^{2}h^{8}\right]+k\left(1+k\right)(k^{4}+h^{4})\right\},
       \end{equation}
       where all the constants have been absorbed into a constant $\widehat{C}_{5},$ and
        \begin{equation*}
        \|e^{n+1}\|_{L^{2}(\Omega)}^{2}\leq\|e^{**}\|_{L^{2}(\Omega)}^{2}+\widehat{C}_{6}\left\{k\left[1+k+h^{4}\right]
        \|e^{**}\|_{L^{2}(\Omega)}^{2}+k^{3}\left[1+k^{2}+h^{4}+h^{6}+h^{8}+\right.\right.
       \end{equation*}
       \begin{equation}\label{123}
        \left.\left.k^{2}h^{2}+k^{2}h^{4}+k^{2}h^{6}+k^{2}h^{8}\right]+k\left(1+k\right)(k^{4}+h^{4})\right\},
       \end{equation}
       where all the constants have been absorbed into a constant $\widehat{C}_{6}.$\\

       Now, setting
       \begin{equation}\label{124}
        \varphi_{1}(k,h)=1+k+h^{2}+h^{4},
       \end{equation}
        and
       \begin{equation}\label{125}
        \varphi_{2}(k,h)=k^{3}\left[1+k^{2}+h^{4}+h^{6}+h^{8}+k^{2}h^{2}+k^{2}h^{4}+k^{2}h^{6}+k^{2}h^{8}\right]+k(1+k)(k^{4}+h^{4}),
       \end{equation}
        plugging estimates $(\ref{121})$-$(\ref{123}),$ straightforward calculations yield
       \begin{equation*}
        \|e^{n+1}\|_{L^{2}(\Omega)}^{2}\leq\|e^{n}\|_{L^{2}(\Omega)}^{2}+k\left\{\widehat{C}_{4}+\widehat{C}_{5}+\widehat{C}_{6}
        +k\left[\widehat{C}_{4}\widehat{C}_{5}+\widehat{C}_{6}(\widehat{C}_{4}+\widehat{C}_{5})+k\widehat{C}_{4}\widehat{C}_{5}\widehat{C}_{6}
        \varphi_{1}(k,h)\right]\varphi_{1}(k,h)\right\}
       \end{equation*}
       \begin{equation*}
        \varphi_{1}(k,h)\|e^{n}\|_{L^{2}(\Omega)}^{2}+\left[\widehat{C}_{4}+\widehat{C}_{5}+\widehat{C}_{6}+k\left[\widehat{C}_{4}\widehat{C}_{5}
        +\widehat{C}_{4}\widehat{C}_{6}+\widehat{C}_{5}\widehat{C}_{6}+k\widehat{C}_{4}\widehat{C}_{5}\widehat{C}_{6}\varphi_{1}(k,h)\right]
        \varphi_{1}(k,h)\right]
        \varphi_{2}(k,h).
       \end{equation*}
       Absorbing all the constants into a constant $\widehat{C}_{7},$ this yields
       \begin{equation*}
        \|e^{n+1}\|_{L^{2}(\Omega)}^{2}\leq\|e^{n}\|_{L^{2}(\Omega)}^{2}+\widehat{C}_{7}\left\{k\left[1+k\left(1+k\varphi_{1}(k,h)\right)
        \varphi_{1}(k,h)\right]\varphi_{1}(k,h)\|e^{n}\|_{L^{2}(\Omega)}^{2}
        \right.
       \end{equation*}
       \begin{equation*}
        \left.+\left[1+k\left[1+k\varphi_{1}(k,h)\right]\varphi_{1}(k,h)\right]\varphi_{2}(k,h)\right\}.
       \end{equation*}
       Summing this up from $n=0,1,2,..,p-1,$ for any nonnegative integer $p$ such that $1\leq p\leq N,$ we obtain
       \begin{equation*}
        \|e^{p}\|_{L^{2}(\Omega)}^{2}\leq\|e^{0}\|_{L^{2}(\Omega)}^{2}+\widehat{C}_{7}\left\{pk\left[1+k\left(1+k\varphi_{1}(k,h)\right)
        \varphi_{1}(k,h)\right]\varphi_{1}(k,h)\underset{n=0}{\overset{p-1}\sum}\|e^{n}\|_{L^{2}(\Omega)}^{2}\right.
       \end{equation*}
       \begin{equation}\label{126}
        \left.+p\left[1+k\left[1+k\varphi_{1}(k,h)\right]\varphi_{1}(k,h)\right]\varphi_{2}(k,h)\right\}.
       \end{equation}
       It comes from the initial condition given in $(\ref{24})$, that $e^{0}_{ij}=0,$ for $0\leq i,j\leq M.$ Applying the Gronwall Lemma, estimate $(\ref{126})$ provides
       \begin{equation}\label{127}
        \|e^{p}\|_{L^{2}(\Omega)}^{2}\leq\widehat{C}_{7}\exp\left\{\widehat{C}_{7}pk\left[1+k\left(1+k\varphi_{1}(k,h)\right)
        \varphi_{1}(k,h)\right]\varphi_{1}(k,h)\right\}p\left[1+k\left[1+k\varphi_{1}(k,h)\right]\varphi_{1}(k,h)\right]\varphi_{2}(k,h).
       \end{equation}
       But $k=\frac{T}{N},$ so $\widehat{C}_{7}kp=\widehat{C}_{7}T\frac{p}{N}\leq\widehat{C}_{7}T$ (since $p\leq N$).
       This fact, together with inequality $(\ref{127})$ result in
       \begin{equation*}
        \|e^{p}\|_{L^{2}(\Omega)}^{2}\leq\widehat{C}_{7}T\exp\left\{\widehat{C}_{7}T\left[1+k\left(1+k\varphi_{1}(k,h)\right)
        \varphi_{1}(k,h)\right]\varphi_{1}(k,h)\right\}\left[1+k\left[1+k\varphi_{1}(k,h)\right]\varphi_{1}(k,h)\right]\varphi_{3}(k,h)^{2},
       \end{equation*}
       where $\varphi_{3}(k,h)^{2}=k^{-1}\varphi_{2}(k,h),$ $\varphi_{2}(k,h)$ is given by equation $(\ref{125}).$
       Taking the square root, it is easy to see that
       \begin{equation}\label{128}
        \|e^{p}\|_{L^{2}(\Omega)}\leq\sqrt{\widehat{C}_{7}T\left[1+k\left[1+k\varphi_{1}(k,h)\right]\varphi_{1}(k,h)\right]}
        \exp\left\{\frac{\widehat{C}_{7}T}{2}\left[1+k\left(1+k\varphi_{1}(k,h)\right)
        \varphi_{1}(k,h)\right]\varphi_{1}(k,h)\right\}\varphi_{3}(k,h).
       \end{equation}
       It comes from equality $\varphi_{3}(k,h)^{2}=k^{-1}\varphi_{2}(k,h),$ and equation $(\ref{125})$ that
              \begin{equation*}
        \varphi_{3}(k,h)^{2}=k^{2}\left[1+k^{2}+h^{4}+h^{6}+h^{8}+k^{2}h^{2}+k^{2}h^{4}+k^{2}h^{6}+k^{2}h^{8}\right]+(1+k)(k^{4}+h^{4})\leq
       \end{equation*}
       \begin{equation*}
        (k+kh^{2})^{2}(\widetilde{C}_{8}+\varphi_{4}(k,h)),
       \end{equation*}
       where $\widetilde{C}_{8}$ is a positive constant independent of $k$ and $h,$ and $\varphi_{4}(k,h)$ tends to zero when $k,h\rightarrow0.$
       Taking the maximum over $p$ of estimate $(\ref{128})$, for $0\leq p\leq N$, the proof of Theorem $\ref{t2}$ is completed thanks to equation $(\ref{66})$.
      \end{proof}

         \section{Numerical experiments and Convergence rate}\label{sec5}
         In this section we construct an exact solution to the initial-boundary value problem $(\ref{1})$-$(\ref{3})$ for a specific source term $f$.
         Furthermore, using Matlab we perform some numerical experiments in bidimensional case. In that case we obtain satisfactory results,
         so our algorithm performances are not worse for multidimensional problems. We consider two cases which are physical examples associated with the
         diffusive coefficient $a=1,$ together with the example introduced in \cite{30wcld}. We confirm the predicted convergence rate from the theory
         (see Section $\ref{sec2}$, Page $6$, last paragraph). This convergence rate is obtained by listing in Tables $1$-$6$ the errors between the computed solution and the exact one with different values of mesh size $h$ and time step $k,$ satisfying $k=\frac{1}{2}h^{2}.$ Finally, we look at the error estimates of our proposed method for the parameter $T=1.$\\

         Assuming that the exact solution to problem $(\ref{1})$-$(\ref{3})$ is of the form $\overline{u}(x,y,t)=\left[1+\exp(ct+dx+by)\right]^{-n},$ where $n$ is an integer. By simple calculations, it holds
         \begin{equation}\label{1n}
            \overline{u}_{t}(x,y,t)=-nc\exp(ct+dx+by)\left[1+\exp(ct+dx+by)\right]^{-n-1},
         \end{equation}
         \begin{equation*}
            \overline{u}_{x}(x,y,t)=-nd\exp(ct+dx+by)\left[1+\exp(ct+dx+by)\right]^{-n-1},
         \end{equation*}
         and
         \begin{equation}\label{2n}
          \overline{u}_{xx}(x,y,t)=-nd^{2}\exp(ct+dx+by)\left[1-n\exp(ct+dx+by)\right]\left[1+\exp(ct+dx+by)\right]^{-n-2}.
         \end{equation}
         In way similar
         \begin{equation}\label{3n}
          \overline{u}_{yy}(x,y,t)=-nb^{2}\exp(ct+dx+by)\left[1-n\exp(ct+dx+by)\right]\left[1+\exp(ct+dx+by)\right]^{-n-2}.
         \end{equation}
         Combining equations $(\ref{1n})$-$(\ref{3n}),$ it is not hard to see that
         \begin{equation*}
         \overline{u}_{t}-(\overline{u}_{xx}+\overline{u}_{yy})=-n\exp(ct+dx+by)\left(1+\exp(ct+dx+by)\right)^{-n-1}\left\{c-(d^{2}+b^{2})
         \left[1-n\exp(ct+dx+by)\right]\right.
         \end{equation*}
         \begin{equation*}
            \left.\left(1+\exp(ct+dx+by)\right)^{-1}\right\}
         \end{equation*}
         Setting $c=-(d^{2}+b^{2}),$ this becomes
         \begin{equation*}
         \overline{u}_{t}-(\overline{u}_{xx}+\overline{u}_{yy})=n(d^{2}+b^{2})\exp(ct+dx+by)\left(1+\exp(ct+dx+by)\right)^{-n-1}
         \left\{1+\left[1-n\exp(ct+dx+by)\right]\right.
         \end{equation*}
         \begin{equation*}
            \left.\left(1+\exp(ct+dx+by)\right)^{-1}\right\}=n(d^{2}+b^{2})\exp(ct+dx+by)\left[2+(1-n)\exp(ct+dx+by))\right]
         \end{equation*}
         \begin{equation}\label{4n}
            \left(1+\exp(ct+dx+by)\right)^{-n-2}.
         \end{equation}

          $\bullet$: \textbf{Case 1: $n=1$.} In this case, equation $(\ref{4n})$ becomes
         \begin{equation*}
         \overline{u}_{t}-(\overline{u}_{xx}+\overline{u}_{yy})=2(d^{2}+b^{2})\exp(ct+dx+by)\left(1+\exp(ct+dx+by)\right)^{-3}.
         \end{equation*}
         Now, taking $2(d^{2}+b^{2})=1,$ this gives $b^{2}=\frac{1}{2}-d^{2}.$ Since $b^{2}$ must be strictly greater than zero, this implies
         $d^{2}<\frac{1}{2}.$ For $d=\pm\frac{\sqrt{3}}{3},$ this implies $b=\pm\frac{\sqrt{6}}{6}$ and $c=-\frac{1}{2}.$ Letting $f(\overline{u})=(1-\overline{u})\overline{u}^{2},$ our exact solution is given by $\overline{u}(x,y,t)=\left[1+\exp\left(-\frac{1}{2}t+\frac{\sqrt{3}}{3}x+\frac{\sqrt{6}}{6}y\right)\right]^{-1},$ for
         $t\in[0,1]$ and $(x,y)\in[0,1]^{2}.$ The initial and boundary conditions are determined by this solution.\\

         $\bullet$: \textbf{Case 2:  $n=-1$.} It comes from equation $(\ref{4n})$ that
         \begin{equation*}
         \overline{u}_{t}-(\overline{u}_{xx}+\overline{u}_{yy})=-2(d^{2}+b^{2})\exp(ct+dx+by).
         \end{equation*}
          Since $\overline{u}=1+\exp(ct+dx+by),$ so $-\exp(ct+dx+by)=1-u.$ Taking $-2(d^{2}+b^{2})=-1,$ it holds $d=\pm\frac{\sqrt{3}}{3},$ $b=\pm\frac{\sqrt{6}}{6}$ and $c=-\frac{1}{2}.$ Setting $f(\overline{u})=1-\overline{u},$ we consider the exact solution defined as $\overline{u}(x,y,t)=1+\exp\left(-\frac{1}{2}t+\frac{\sqrt{3}}{3}x+\frac{\sqrt{6}}{6}y\right),$ for $t\in[0,1]$ and $(x,y)\in[0,1]^{2}.$ The initial and boundary conditions are determined by this solution.\\

        To analyze the convergence rate of our numerical scheme, we take the mesh size $h\in\{\frac{1}{2},\frac{1}{2^{2}},\frac{1}{2^{3}},
        \frac{1}{2^{4}},\frac{1}{2^{5}}\}$ and time step $k\in\{\frac{1}{2^{2}},\frac{1}{2^{3}},\frac{1}{2^{4}},\frac{1}{2^{5}},\frac{1}{2^{6}},
        \frac{1}{2^{7}},\frac{1}{2^{8}},\frac{1}{2^{9}},\frac{1}{2^{10}}\frac{1}{2^{11}}\},$ by a mid-point refinement. Under the time step restriction $(\ref{44}),$ we set $k=\frac{1}{2}h^{2}$ and we compute the error estimates: $\||E(u)|\|_{L^{2}(0,T;L^{2})},$ $\||E(u)|\|_{L^{\infty}(0,T;L^{2})}$ and $\||E(u)|\|_{L^{1}(0,T;L^{2})}$ related to the time-split method to see that the algorithm is stable, second order accuracy in time and fourth order convergent in space. In addition, we plot the approximate solution, the exact one and the errors versus $n.$ From this analysis, a three-level explicit time-split MaCormack method is both efficient and effective than a two-level linearized compact ADI approach. In fact, although the two-level linearized compact ADI scheme has the same convergent rate (see \cite{wcld}, Theorem $6.6,$ p. $19$) this method requires too much computer times to achieve the solution. Furthermore, when $h$ varies in the given range, we observe from Tables $1$-$6$ that the approximation errors $O(k^{\beta})+O(h^{\theta})$ are dominated by the h-terms $O(h^{\theta})$ (or $k$-terms $O(k^{\beta})$). So, the ratio $r^{m}_{u},$ where $m=1,2,\infty,$ of the approximation errors on two adjacent mesh levels $\Omega_{2h}$ and $\Omega_{h}$ is approximately $(2h)^{\theta}/h^{\theta}=2^{\theta},$ where $m$ refers to the $L^{m}(0,T;L^{2}(\Omega)$-error norm. Hence, we can simply use $r^{m}_{u}$ to estimate the corresponding convergence rate with respect to $h.$ Define the norms for the approximate solution $u,$ the exact one $\overline{u},$ and the errors $E(u),$ as follows
         \begin{equation*}
         \||u|\|_{L^{2}(0,T;L^{2})}=\left[k\underset{n=0}{\overset{N}\sum}\|u^{n}\|_{L_{f}^{2}}^{2}\right]^{\frac{1}{2}};
         \text{\,\,}\||\overline{u}|\|_{L^{2}(0,T;L^{2})}=\left[k\underset{n=0}{\overset{N}\sum}\|\overline{u}^{n}\|_{L_{f}^{2}}^{2}\right]^{\frac{1}{2}};
         \end{equation*}
         \begin{equation*}
           \||E(u)|\|_{L^{2}(0,T;L^{2})}=\left[k\underset{n=0}{\overset{N}\sum}\|u^{n}-\overline{u}^{n}\|_{L_{f}^{2}}^{2}\right]^{\frac{1}{2}};
           \text{\,\,}\||E(u)|\|_{L^{1}(0,T;L^{2})}=k\underset{n=0}{\overset{N}\sum}\|u^{n}-\overline{u}^{n}\|_{L_{f}^{2}};
         \end{equation*}
         and
         \begin{equation*}
            \||E(u)|\|_{L^{\infty}(0,T;L^{2})}=\underset{0\leq n\leq N}{\max}\|u^{n}-\overline{u}^{n}\|_{L_{f}^{2}}.
         \end{equation*}

          $\bullet$ \textbf{Test $1.$} Let $\Omega$ be the unit square $(0,1)\times(0,1)$ and $T$ be the final time, $T=1.$ We assume
       that the diffusive coefficient $a=1,$ and we choose the force $f(\overline{u})=(1-\overline{u})\overline{u}^{2},$ in such a way that the exact solution $\overline{u}$ is given by
       \begin{equation*}
         \overline{u}(x,y,t)=\left[1+\exp\left(-\frac{1}{2}t+\frac{\sqrt{3}}{3}x+\frac{\sqrt{6}}{6}y\right)\right]^{-1}.
       \end{equation*}
       The initial and boundary conditions are given by this solution. We take the mesh size and time step: $h\in\{\frac{1}{2},\frac{1}{2^{2}},
       \frac{1}{2^{3}},\frac{1}{2^{4}},\frac{1}{2^{5}}\}$ and $k\in\{\frac{1}{2^{2}},\frac{1}{2^{3}},\frac{1}{2^{4}},\frac{1}{2^{5}},
       \frac{1}{2^{6}},\frac{1}{2^{7}},\frac{1}{2^{8}},\frac{1}{2^{9}},\frac{1}{2^{10}},\frac{1}{2^{11}}\}.$\\

           \textbf{Tables 1,2.} Analyzing of convergence rate $O(h^{\theta}+\Delta t^{\beta})$ for time-split MacCormack by $r^{m}_{u},$ with varying time step $k=\Delta t$ and mesh grid $h=\Delta x$. \\

            \textbf{Case: $k=\frac{1}{2}h^{2}$.}
           $$\begin{tabular}{|c|c|c|c|c|c|c|}
            \hline
            $h$ & $ \||E(u)|\|_{L^{2}}$ & $r^{2}_{u}$ & $ \||E(u)|\|_{L^{\infty}}$ & $r^{\infty}_{u}$ & $ \||E(u)|\|_{L^{1}}$ & $r^{1}_{u}$ \\
            \hline
            $2^{-1}$ & 0.0054 & ----  & 0.0058 & ----  & 0.0053 & ---- \\
            \hline
            $2^{-2}$ & 0.0014 & 3.8571 & 0.0014 & 4.1429 & 0.0014 & 3.7857\\
            \hline
            $2^{-3}$ & $0.372\times10^{-3}$ & 3.7634 & $0.3849\times10^{-3}$ & 3.6373 & $0.3717\times10^{-3}$ & 3.7665\\
            \hline
            $2^{-4}$ & $0.966\times10^{-4}$ & 3.8509 & $0.995\times10^{-4}$ & 3.8683 & $0.963\times10^{-4}$ & 3.8598\\
            \hline
            $2^{-5}$ & $0.2459\times10^{-4}$ & 3.9284 & $0.2529\times10^{-4}$ & 4.0963 & $0.2450\times10^{-4}$ & 3.9306\\
            \hline
          \end{tabular}$$
           \text{\,}\\
              \textbf{ Case: $k=h^{2}$.}
          $$\begin{tabular}{|c|c|c|c|c|c|c|}
            \hline
            $h$ & $ \||E(u)|\|_{L^{2}}$ & $r^{2}_{u}$ & $ \||E(u)|\|_{L^{\infty}}$ & $r^{\infty}_{u}$ & $ \||E(u)|\|_{L^{1}}$ & $r^{1}_{u}$ \\
            \hline
            $2^{-1}$ & 0.0200 &  ---- & 0.0227 & ----  & 0.0189 &  ---- \\
            \hline
            $2^{-2}$ & 0.0050 & 4.0000 & 0.0069 & 3.2899 & 0.0049 & 3.8571 \\
            \hline
            $2^{-3}$ & NAN    & ---- & inf    & ---- & Nan    & ----   \\
            \hline
          \end{tabular}$$

        $\bullet$ \textbf{Test $2.$} Now, let $\Omega$ be the unit square $(0,1)^{2}$ and $T=1.$ The diffusive term $a$ is assumed equals $1.$ We
       choose the force $f$ such that the analytic solution $\overline{u}$ is defined as
       \begin{equation*}
         \overline{u}(x,y,t)=1+\exp\left(-\frac{1}{2}t+\frac{\sqrt{3}}{3}x+\frac{\sqrt{6}}{6}y\right),\text{\,\,\,\,\,and\,\,\,\,\,\,}f(\overline{u})=
         1-\overline{u}.
       \end{equation*}
       The initial and boundary conditions also are given by the exact solution $\overline{u}.$ Similar to \textbf{Test $1,$} we take the mesh size and time step: $h\in\{\frac{1}{2},\frac{1}{2^{2}},\frac{1}{2^{3}},\frac{1}{2^{4}},\frac{1}{2^{5}}\}$ and $k\in\{\frac{1}{2^{2}},\frac{1}{2^{3}},
       \frac{1}{2^{4}},\frac{1}{2^{5}},\frac{1}{2^{6}},\frac{1}{2^{7}},\frac{1}{2^{8}},\frac{1}{2^{9}},\frac{1}{2^{10}},\frac{1}{2^{11}}\}.$\\

          \textbf{Tables 3,4.} Convergence rates $O(h^{\theta}+\Delta t^{\beta})$ for time-split MacCormack by $r^{m}_{u},$ with varying
          spacing $h$ and time step $k$.\\

           \textbf{Case: $k=\frac{1}{2}h^{2}$.}

           $$\begin{tabular}{|c|c|c|c|c|c|c|}
            \hline
            $h$ & $ \||E(u)|\|_{L^{2}}$ & $r^{2}_{u}$ & $ \||E(u)|\|_{L^{\infty}}$ & $r^{\infty}_{u}$ & $ \||E(u)|\|_{L^{1}}$ & $r^{1}_{u}$ \\
            \hline
            $2^{-1}$ & 0.0245 &        & 0.0310 &        & 0.0232 &  \\
            \hline
            $2^{-2}$ & 0.0060 & 4.0833 & 0.0072 & 4.3056 & 0.0060 & 3.8667\\
            \hline
            $2^{-3}$ & $0.16\times10^{-2}$ & 3.75 & $0.19\times10^{-2}$ & 3.7895 & $0.16\times10^{-2}$ & 3.75\\
            \hline
            $2^{-4}$ & $0.4\times10^{-3}$ & 4.0000 & $0.5\times10^{-3}$ & 3.8000 & $0.4\times10^{-3}$ & 4.0000\\
            \hline
            $2^{-5}$ & $0.1061\times10^{-3}$ & 3.7700 & $0.1248\times10^{-3}$ & 4.0064 & $0.1052\times10^{-3}$ & 3.8023\\
            \hline
          \end{tabular}$$
           \text{\,}\\
           \textbf{Case: $k=h^{2}$.}
           $$\begin{tabular}{|c|c|c|c|c|c|c|}
            \hline
            $h$ & $ \||E(u)|\|_{L^{2}}$ & $r^{2}_{u}$ & $ \||E(u)|\|_{L^{\infty}}$ & $r^{\infty}_{u}$ & $ \||E(u)|\|_{L^{1}}$ & $r^{1}_{u}$ \\
            \hline
            $2^{-1}$ & 0.0892 &        & 0.1163 &        & 0.0800 &  \\
            \hline
            $2^{-2}$ & 0.0364 & 2.4505 & 0.0814 & 1.4287 & 0.0320 & 2.5000\\
            \hline
            $2^{-3}$ & $0.2132\times10^{20}$ & ---- & $1.5596\times10^{20}$ & ---- & $0.0409\times10^{20}$ & ---- \\
            \hline
          \end{tabular}$$
           \text{\,}\\
           \text{\,}\\
       $\bullet$ \textbf{Test $3.$} Finally, let $\Omega$ be the unit square $(0,1)\times(0,1)$ and $T=1.$ We assume that $a=1,$ and
       the force $f$ is chosen such that the exact solution $u$ is given by
       \begin{equation*}
         \overline{u}(x,y,t)=\frac{1}{2}+\frac{1}{2}\tanh\left(\frac{3}{4}t+\frac{1}{4}x+\frac{1}{4}y\right),\text{\,\,\,\,\,and\,\,\,\,\,\,}
         f(\overline{u})=(1-\overline{u}^{2})\overline{u}.
       \end{equation*}
       The initial and boundary conditions are given by the exact solution $\overline{u}$.\\

       Similar to both \textbf{Tests $1,2$} the mesh size and time step are chosen such that: $h\in\{\frac{1}{2},\frac{1}{2^{2}},\frac{1}{2^{3}},
       \frac{1}{2^{4}},\frac{1}{2^{5}}\}$ and $k\in\{\frac{1}{2^{2}},\frac{1}{2^{3}},\frac{1}{2^{4}},\frac{1}{2^{5}},\frac{1}{2^{6}},\frac{1}{2^{7}},
       \frac{1}{2^{8}},\frac{1}{2^{9}},\frac{1}{2^{10}},\frac{1}{2^{11}}\},$ by a mid-point refinement. We compute the error estimates: $E(u)$ related to a three-level explicit time-split MacCormack approach to see that the algorithm is second order convergent in time and fourth order accurate in space.
       Furthermore, we plot the errors together with the energies versus $n.$ From this analysis, it is obvious that a three-level time-split scheme is efficient and effective than a two-level linearized compact ADI method which has the same order of convergence.\\

          \textbf{Tables 5,6.} Convergence rates $O(h^{\theta}+\Delta t^{\beta})$ for time-split MacCormack by $r^{m}_{u},$ with varying spacing $h$ and
          time step $\Delta t$.\\

           \textbf{Case: $k=\frac{1}{2}h^{2}$.}
           $$\begin{tabular}{|c|c|c|c|c|c|c|}
            \hline
            $h$ & $ \||E(u)|\|_{L^{2}}$ & $r^{2}_{u}$ & $ \||E(u)|\|_{L^{\infty}}$ & $r^{\infty}_{u}$ & $ \||E(u)|\|_{L^{1}}$ & $r^{1}_{u}$ \\
            \hline
            $2^{-1}$ & 0.0112 &        & 0.0151 &        & 0.0106 &  \\
            \hline
            $2^{-2}$ & 0.0029 & 3.8621 & 0.0036 & 4.1944 & 0.0028 & 3.7857\\
            \hline
            $2^{-3}$ & $0.8\times10^{-3}$ & 3.6250 & $0.1\times10^{-2}$ & 3.6000 & $0.8\times10^{-3}$ & 3.5000\\
            \hline
            $2^{-4}$ & $0.2001\times10^{-3}$ & 3.9980 & $0.2506\times10^{-3}$ & 3.9904 & $0.5496\times10^{-3}$ & 4.0796\\
            \hline
            $2^{-5}$ & $0.509\times10^{-4}$ & 3.9312 & $0.638\times10^{-4}$ & 3.9279 & $0.5000\times10^{-4}$ & 3.9220\\
            \hline
          \end{tabular}$$
           \text{\,}\\
           \textbf{Case: $k=h^{2}$.}
           $$\begin{tabular}{|c|c|c|c|c|c|c|}
            \hline
            $h$ & $ \||E(u)|\|_{L^{2}}$ & $r^{2}_{u}$ & $ \||E(u)|\|_{L^{\infty}}$ & $r^{\infty}_{u}$ & $ \||E(u)|\|_{L^{1}}$ & $r^{1}_{u}$ \\
            \hline
            $2^{-1}$ & 0.0400 &        & 0.0546 &        & 0.0363 &  \\
            \hline
            $2^{-2}$ & 0.0150 & 2.6667 & 0.6601 & 1.6957 & 0.5821 & 2.6691\\
            \hline
            $2^{-3}$ & NaN & ---- & Inf & ---- & NaN & ---- \\
            \hline
          \end{tabular}$$

            The analysis on the convergence of the numerical scheme presented in Section $\ref{sec4},$ has suggested that our algorithm is first order convergent in time and fourth order accurate in space. If the result provided in Section $\ref{sec2},$ page $6$, last paragraph is to believe, this shows that the time-split MacCormack scheme is inconsistent. Surprisingly, it comes from \textbf{Tests 1-3}, more precisely Figures $\ref{fig1}$-$\ref{fig3}$ and \textbf{Tables 1-6}, that the three-level explicit time-split MacCormack technique is stable, second order accurate in time and fourth order convergent in space under the time step restriction $(\ref{44}),$ which confirms the theoretical result provided in Section $\ref{sec2},$ page $6$, last paragraph. Thus, the considered method applied to initial-boundary value problem $(\ref{1})$-$(\ref{3})$ is: stable, consistent, second order convergent in time and fourth order accurate in space.

         \section{General conclusion and future works}\label{sec6}
         We have studied in detail the stability, error estimates and convergence rate of a three-level explicit time-split MacCormack method for solving the $2$D nonlinear reaction-diffusion equation $(\ref{1})$-$(\ref{3})$. The analysis has suggested that our method is stable, consistent, second order accuracy in time and fourth order convergent in space under the time step restriction $(\ref{44})$. This convergence rate is confirmed by a large set of numerical experiments (see both Figures $\ref{fig1}$-$\ref{fig3}$ and \textbf{Tables 1-6}). Numerical evidences also show that the new algorithm is: (1) more efficient and effective than a two-level linearized compact ADI method, (2) fast and robust tools for the integration of general systems of parabolic PDEs. However, the time-split MacCormack method is not is a satisfactory approach for solving high Reynolds number flows where the viscous region becomes very thin. For these flows, the mesh grid must be highly refined in order to accurately resolve the viscous regions. This leads to very small time steps and subsequently long computing times. To overcome this difficulty, MacCormack developed a hybrid version of his scheme, which is known as MacCormack rapid solver method \cite{mc1}. This hybrid scheme is an explicit-implicit method which has been proved to be from $10$ to $100$ more faster than a time-split MacCormack algorithm (see \cite{apt}, P. 632). The rapid solver method will be applied to the two-dimensional nonlinear reaction-diffusion equations in our future works.

          \begin{figure}
         \begin{center}
          Analysis of stability and convergence of a three-level explicit time-split MacCormack method with $a=1$.
          \psfig{file=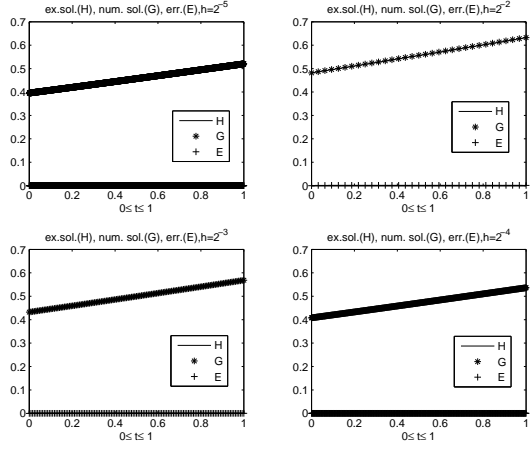,width=7cm}
        \end{center}
         \caption{$\overline{u}(x,y,t)=\left[1+\exp\left(-\frac{1}{2}t+\frac{\sqrt{3}}{3}x+\frac{\sqrt{6}}{6}y\right)\right]^{-1}$ and $f(\overline{u})=(1-\overline{u})\overline{u}^{2}$}
          \label{fig1}
          \end{figure}

           \begin{figure}
         \begin{center}
          Analysis of stability and convergence of a three-level explicit time-split MacCormack method with $a=1$.
           \psfig{file=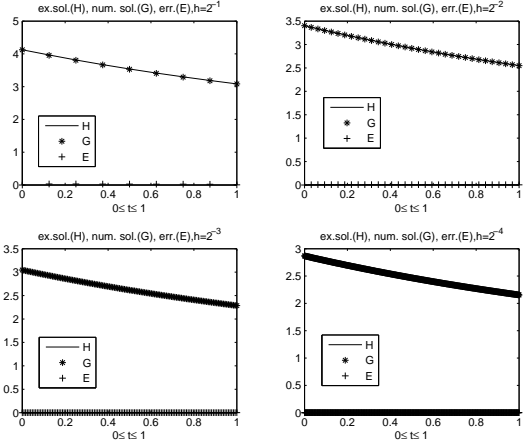,width=7cm}
        \end{center}
         \caption{ $\overline{u}(x,y,t)=1+\exp\left(-\frac{1}{2}t+\frac{\sqrt{3}}{3}x+\frac{\sqrt{6}}{6}y\right)$ and $f(\overline{u})=1-\overline{u}$}
          \label{fig2}
          \end{figure}

           \begin{figure}
         \begin{center}
          Analysis of stability and convergence of a three-level explicit time-split MacCormack method with $a=1$.
          \psfig{file=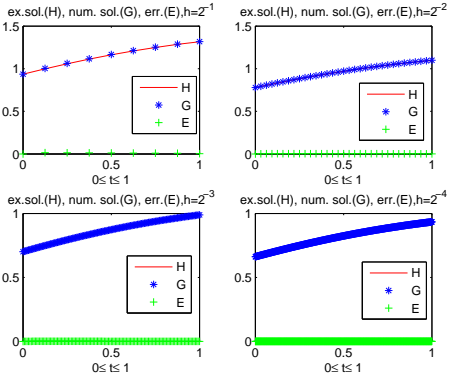,width=7cm}
        \end{center}
         \caption{$\overline{u}(x,y,t)=\frac{1}{2}+\frac{1}{2}\tanh\left(\frac{3}{4}t+\frac{1}{4}x+\frac{1}{4}y\right)$ and $f(\overline{u})=(1-\overline{u}^{2})\overline{u}$}
          \label{fig3}
          \end{figure}
          \newpage


\begin{thebibliography}{99}

   \bibitem{1}
    B. Alberts et al.. ""Molecular biology of the cell", $4$th ed. Garland Science, USA $(2002).$

    \bibitem{apt}
    F. A. Anderson, R. H. Pletcher, J. C. Tannehill. "Computational fluid mechanics and Heat Transfer". Second Edition, Taylor and Francis, New York, $(1997).$

    \bibitem{3wcld}
    A. Araujo, S. Barbeiro, P. Serrhano. "Convergence of finite difference schemes for nonlinear complex reaction-diffusion processes", SIAM J. Numer. Anal., $53$ $(2015)$ $228$-$250.$

    \bibitem{3}
    D. G. Aronson, H. F. Weinberger. "Multidimensional nonlinear diffusion arising in population genetics", Advance mathematics, vol. $30,$ $(1978)$ $33$-$76.$

    \bibitem{14wcld}
    U. M. Ascher, S. J. Ruuth, B. Wetton. "Implicit-explicit methods for time-dependent partial differential equations", SIAM J. Num. Anal., $32$ $(1995)$ $797$-$823.$

   \bibitem{5}
    I. Barras, E. J. Cranspin, P. K. Maini. "Mode transitions in a model reaction-diffusion system driven by domain growth and noise", Bull. Math. Biol., vol. $68$ $(2006)$ $981$-$995.$

    \bibitem{12}
    R. Casten, R. Holland. "Instability results for reaction-diffusion equation with Neumann boundary condition". J. Diff. Eq., vol.
     $27$ $(1978)$ $266$-$273.$

     \bibitem{30wcld}
    N. Chafee, E. F. Infante. "A bifurcation problem for a nonlinear partial differential equation of parabolic type". Appl. Anal., $4(1)$ $(1974)$ $17$-$37.$

    \bibitem{27wcld}
    V. G. Danilov, V. P. Maslov, K. A. Volosov. "Mathematical modeling of Heat Transfer Processes", Kluwer, Dordrecht, $(1995)$.

    \bibitem{7wcld}
    R. I. Fernandes, B. Bialecki, G. Fairweather. "An ADI extrapolated Crank-Nicolson orthogonal spline collocation method for nonlinear reaction-diffusion systems on evolving domain", J. comput. Phys., $299$ $(2015)$ $561$-$580.$

    \bibitem{29}
    J. S. Guo, Y. Morita. "Entire solutions of reaction-diffusion equations and an application to discrete diffusive equations", Discrete Contin. Dyn. Sys., vol. $12$ $(2005)$ $193$-$212.$

    \bibitem{34}
    D. Holcman, Z. Schuss. "Modeling calcium dynamics in dendritic spines". SIAM J. Appl. Math., vol $65$ No $2$, $(2005)$ $(1006)$-$(1026)$.

    \bibitem{3rb}
    P. D. Lax, B. Wendroff. "Systems of conservation laws", Comm. Pure $\&$ Appl. Math. $13$ $(160)$ $217$-$237.$

    \bibitem{8wcld}
    B. Li, H. Gao, W. Sun. "Unconditionally optimal error estimates of a Crank-Nicolson Galerkin method for the nonlinear thermistor equations", SIAM J. Numer. Anal., $53$ $(2014)$ $933$-$954.$

    \bibitem{4wcld}
    D. Li, C. Zhang. "Split Newton iterative algorithm and it applications", Appl. Math. Comp., $217$ $(2010)$ $2260$-$22265.$

    \bibitem{28wcld}
    D. Li, C. Zhang, W. Wang, Y. Zhang. "Implicit-explicit predictor-corrector schemes for nonlinear parabolic differential equations", Appl. Math. Model., $35(6)$ $(2011)$ $2711$-$2722.$

    \bibitem{1wcld}
    N. Li, J. Steiner, S. Tang. "Convergence and stability analysis of an explicit finite difference method for $2$D reaction-diffusion equations", J. Aust. Math. Soc. Ser. B. $36$ $(1994)$ $234$-$241$.

    \bibitem{mc1}
    R. W. MacCormack. "An efficient numerical method for solving the time-dependent compressible Navier-Stokes equations at high Reynolds numbers", NASA TM $(1976)$ $73$-$129.$

    \bibitem{mc2}
    R. W. MacCormack. "A numerical method for solving the equations of compressible viscous-flows", AIAA paper $81$-$0110,$ St. Louis, Missouri $(1981).$

    \bibitem{mc3}
    R. W. MacCormack. "Current status of numerical solutions Navier-Stokes equations ", AIAA paper $85$-$0032,$ Reno, Nevada $(1985).$

    \bibitem{mc4}
    R. W. MacCormack, B. S. Baldwin. "A numerical method for solving the Navier-Stokes equations with applications to Shock-Boundary Layer Interactions", AIAA paper $75$-$1,$ Pasadena, California $(1975).$

    \bibitem{mc5}
    R. W. MacCormack, A. J. Paullay. "Computational efficiency achieved by time splitting of finite difference operators", AIAA paper $72$-$154,$ San Diego, California $(1972).$

    \bibitem{1rb}
    R. W. MacCormack, A. J. Paullay. "The effect of viscosity in hyrvelocity impact cratering ", AIAA paper $69$-$354,$ American Institute of Aeronautics and astrophysics, Cincinnati $(1969).$

    \bibitem{6}
    J. D. Murray. "Mathematical biology: Spatial models and Biomedical Applications", $3$th ed., Springer-Verlag, $(2003).$

    \bibitem{nnnn}
    F. T. Namio, E. Ngondiep, R. Ntchantcho, J. C. Ntonga. "Mathematical models of complete shallow water equations with source terms, stability analysis of Lax-Wendroff scheme", J. Theor. Comput. Sci., Vol. $2(132)$ $(2015).$

    \bibitem{en1}
    E. Ngondiep. "Stability analysis of MacCormack rapid solver method for evolutionary Stokes-Darcy problem", J. Comput. Appl. Math. $345(2019)$, $269$-$285$, $17$ pages.

    \bibitem{en2}
    E. Ngondiep, "Long Time Stability and Convergence Rate of MacCormack Rapid Solver Method for Nonstationary Stokes-Darcy Problem", Comput. Math. Appl., Vol $75$, $(2018)$, $3663$-$3684,$ $22$ pages.

     \bibitem{en5}
     E. Ngondiep. "An efficient three-level explicit time-split method for solving $2$D heat conduction equations", submitted.

    \bibitem{en4}
    E. Ngondiep, "Long time unconditional stability of a two-level hybrid method for nonstationary incompressible Navier-Stokes equations", J. Comput. Appl. Math. $345(2019)$, $501$-$514$, $14$ pages.

    \bibitem{en}
     E. Ngondiep. "Asymptotic growth of the spectral radii of collocation matrices approximating elliptic boundary problems", Int. J. Appl. Math. Comput.,
     $4(2012)$, $199$-$219,$ $20$ pages.

    \bibitem{en3}
    E. Ngondiep, "Error Estimate of MacCormack Rapid Solver Method for 2D Incompressible Navier-Stokes Problems", submitted.

    \bibitem{nrn}
    E. Ngondiep, R. Alqahtani and J. C. Ntonga, "Stability Analysis and Convergence Rate of MacCormack Scheme for Complete Shallow Water Equations with Source Terms", submitted.

     \bibitem{18wcld}
    K. M. Owolabi. "Robust IMEX schemes for solving two-dimensional reaction-diffusion models", Int. J. Nonl. SC. Numer. Sim., $15$ $(2015)$ $271$-$284.$

    \bibitem{17wcld}
    K. M. Owolabi, K. C. Patidar. "High order time stepping methods for solving time-dependent reaction-diffusion equations arising in biology", Appl. Math. Comput. $240$ $(2014)$ $30$-$50.$

    \bibitem{2wcld}
    A. Quarteroni, A. Valli. "Numerical approximations of partial differential equations", Springer-Verlag, New-York $(1997)$.

    \bibitem{8}
    A. M. Turing. "The Chemical Basis of Morphogenesis", Phil. Trans. R. Soc. London, vol. $237$ $(1995)$ $37$-$72.$

    \bibitem{wcld}
    F. Wu, X. Cheng, D. Li, J. Duan. "A two-level linearized compact ADI scheme for two-dimensional nonlinear reaction-diffusion equations", Comput. Math. Appl., $(2018)$.

    \bibitem{24wcld}
    A. Xiao, G. Zhang, X. Yi. "Two classes of implicit-explicit multistep methods for nonlinear stiff initial-value problems", Appl. Math. Comput., $247$ $(2014)$ $47$-$60.$
     \end{thebibliography}
     \end{document}